\definecolor{fig}{RGB}{76,78,92}
\definecolor{seaborngreen}{rgb}{0.3333333333333333, 0.6588235294117647, 0.40784313725490196}
\title{A Positivity-Preserving Finite Element Framework for Accurate Dose Computation in Proton Therapy}
\author[1]{Ben S. Ashby}
\author[1]{Abdalaziz Hamdan}
\author[1,2]{Tristan Pryer}
\address{$^1$ Institute for Mathematical Innovation\\ University of
  Bath, Bath, UK. $^2$ Department of Mathematical Sciences
  \\ University of Bath, Bath, UK.}
\begin{document}

\begin{abstract}
We present a stabilised finite element method for modelling proton
transport in tissue, incorporating both inelastic energy loss and
elastic angular scattering. A key innovation is a
positivity-preserving formulation that guarantees non-negative fluence
and dose, even on coarse meshes. This enables reliable computation of
clinically relevant quantities for treatment planning. We derive a
priori error estimates demonstrating optimal convergence rates and
validate the method through numerical benchmarks. The proposed
framework provides a robust, accurate and efficient tool for advancing
proton beam therapy.
\end{abstract}

\maketitle

\section{Introduction}

Proton Beam Therapy (PBT) has emerged as a promising modality for
treating cancers where conventional radiotherapy fails to sufficiently
spare surrounding healthy tissue. These include pediatric cases,
skull-base tumours and complex head and neck malignancies.

A proton deposits energy as it traverses matter, with energy
deposition increasing with depth and peaking at the end of its path, a
phenomenon known as the Bragg peak, illustrated in
Figure~\ref{fig:braggpeak}. This dose localisation underpins the
appeal of PBT, it enables precise tumour targeting while reducing
irradiation to critical structures.

\begin{wrapfigure}{r}{0.5\linewidth}
  \includegraphics[width=\linewidth]{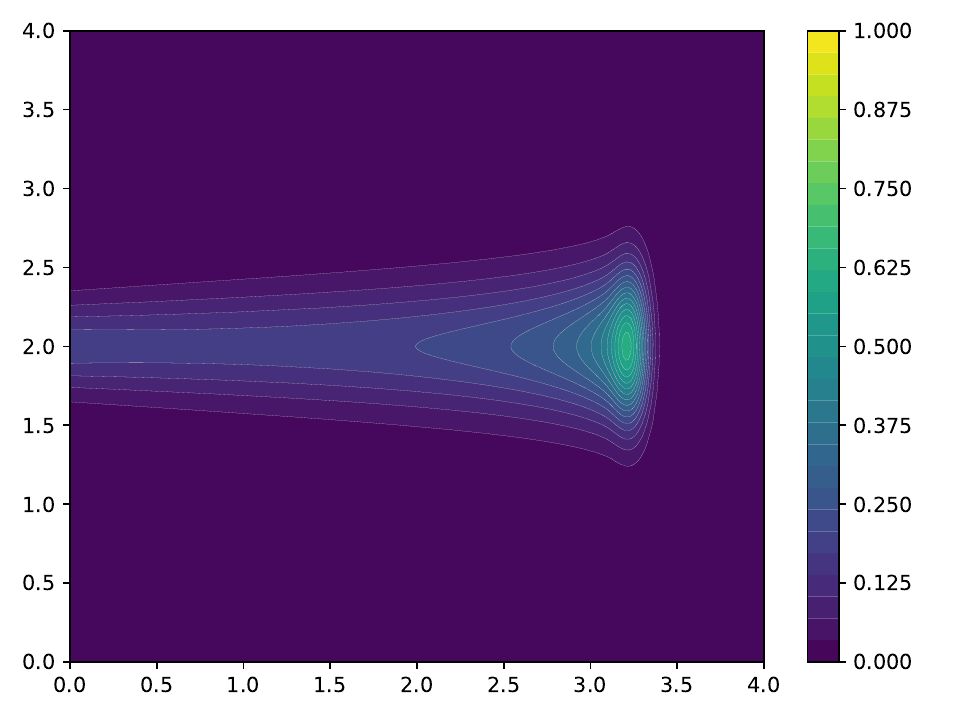}
  \captionof{figure}{
    \label{fig:braggpeak}
    A 2D slice of dose from a 62\,MeV proton beam. The sharp Bragg
    peak near the end of the track highlights the potential of proton
    beams to localise dose. This deterministic simulation includes
    inelastic and Coulomb scattering but neglects nuclear
    interactions.  }
\end{wrapfigure}

The potential for improved dose distributions has been recognised for
decades \cite{lomax_intensity_2003}, and by 2021, over 275{,}000
patients had received PBT worldwide
\cite{particle_therapy_co-operative_group_ptcog_particle_nodate}. Nonetheless,
conclusive evidence of its superiority over photon-based treatments
remains limited~\cite{chen_proton_2023}.

A persistent challenge in PBT is inter-fractional anatomical
variation, such as changes in patient hydration or positioning. These
perturbations introduce uncertainties in the delivered dose, often
mitigated by enlarging treatment margins, compromising the very
advantage of protons. Improving robustness thus hinges on predictive,
high-fidelity models of proton transport.

From a computational perspective, accurately modelling proton
transport and dose deposition is essential to addressing these
challenges. Protons interact with matter through inelastic collisions
(leading to continuous energy loss), elastic scattering (which
deflects particle direction), and non-elastic nuclear reactions
(producing secondaries); see Figure~\ref{fig:atom}. These processes
make high-fidelity simulation computationally intensive, creating a
bottleneck for routine clinical use.

In this work, we develop a deterministic finite element model for
proton transport, designed to ensure numerical accuracy and preserve
key physical properties. Deterministic solvers provide efficient
alternatives to Monte Carlo methods, but must contend with issues such
as nonphysical negativity and the stiffness of kinetic-type equations.

Our model incorporates both inelastic and elastic scattering into a
computationally efficient PDE-based framework. Continuous energy loss
is treated using the transport model of~\cite{ashby2025efficient},
while angular broadening is handled via a Laplace--Beltrami diffusion
operator on the sphere~\cite{dziuk1988finite,dedner2013analysis}. A
key innovation is a positivity-preserving discretisation that enforces
nonnegativity at Lagrange nodes using a variational inequality
formulation~\cite{amiri2024nodally,barrenechea2024nodally,ashby2025nodally}. This
addresses a long-standing issue in deterministic transport, where
standard methods produce unphysical negative fluences or
doses~\cite{stammer2024deterministic}.

We also propose three approaches for dose computation, Galerkin
projection, piecewise constant projection and a variational
inequality, which trade off between accuracy, simplicity and
positivity preservation. This makes our framework suitable for use in
downstream optimisation and inverse problems. Our techniques
generalise to other observables such as LET and biological response
models~\cite{ashby2025efficient}.

We prove optimal convergence rates for both fluence and dose in the
SUPG and positivity-preserving schemes. These theoretical results are
validated through numerical benchmarks, including comparisons with
Monte Carlo and semi-analytic solutions. The results demonstrate that
our method yields stable, accurate and physically meaningful
approximations, making it a practical tool for treatment planning and
future optimisation in proton therapy.

Monte Carlo methods have traditionally served as the gold standard for
modelling proton transport due to their ability to accurately capture
individual proton
interactions~\cite{salvat2013generic,jabbari2014fast}. However, their
high computational cost often limits their practicality in clinical
contexts, particularly for real-time treatment planning and dose
delivery verification \cite{CoxHattamKyprianouPryer:2023}. In
contrast, deterministic modelling of proton transport remains
relatively underexplored when compared to the extensive literature on
neutron transport and linear Boltzmann transport more broadly. Recent
efforts, such as~\cite{stammer2024deterministic}, have begun to
address this gap by developing deterministic solvers that balance
computational efficiency with physical fidelity. These approaches
borrow heavily from the numerical and analytical theory developed for
the linear Boltzmann equation, which models transport in space, angle,
energy and time.

For instance,~\cite{houston2024efficient} investigates fully
discontinuous Galerkin (dG) discretisations of linear Boltzmann
transport, treating the angular variable as an independent
coordinate. This leads to a high-dimensional problem: in spatial
dimension $d$, the PDE spans $d$ spatial, $d-1$ angular and one energy
variable. The resulting computational burden has motivated the
development of iterative and acceleration techniques, such as the
improved source iteration in~\cite{houston2024iterative},
quadrature-free methods in~\cite{radley2023quadrature} and efficient
sweep-based solvers in~\cite{calloo2024cycle}. These works aim to
reduce the cost of solving kinetic equations without sacrificing
accuracy or stability.

In deterministic settings, elastic scattering is often modelled using
simplified diffusion operators or approximate angular kernels. The
Laplace--Beltrami operator provides a geometrically natural model for
angular diffusion and has been applied within the context of
stochastic differential equation modelling for proton transport
\cite{crossley2025jump}. Inelastic scattering, which describes
continuous energy loss, is typically encoded via stopping power models
such as Bethe–Bloch. More recently,~\cite{ashby2025efficient}
introduced a PDE-based transport framework that captures continuous
slowing down efficiently, providing an alternative to stochastic
particle simulations.

Ensuring physical fidelity in deterministic models requires the
development of positivity-preserving schemes. Without such mechanisms,
standard discretisations may produce unphysical negative values for
fluence or dose, especially when working with coarse meshes. This
issue has been extensively studied in related elliptic and
convection-dominated problems. For
instance,~\cite{barrenechea2024nodally} proposes a variational
inequality framework that enforces positivity at nodal degrees of
freedom. The discrete solution lies in a convex set defined by bounds
on the nodal values, ensuring physical admissibility. These ideas
provide the foundation for our treatment of positivity in proton
transport.

The remainder of this paper is structured as follows. In
\S\ref{sec:model}, we introduce the mathematical model for charged
particle transport, including the representation of elastic and
inelastic scattering processes and the formulation of the governing
partial differential equation. \S\ref{sec:supg} develops the
streamline-upwind Petrov--Galerkin (SUPG) discretisation, including
the positivity-preserving variational inequality scheme and associated
error analysis. In \S\ref{sec:dose}, we consider numerical
approximation of the absorbed dose, propose several projection
strategies and derive bounds for the induced dose error. Numerical
results are presented in \S\ref{sec:numerics}, where we validate our
method against analytic benchmarks and high-fidelity Boltzmann
solutions and demonstrate the accuracy and stability of the proposed
formulations. Finally, \S\ref{sec:conclusion} summarises the main
findings and outlines directions for future research.

\section{Modelling of charged particle transport}
\label{sec:model}

In this section we introduce fundamental modelling concepts in proton
transport and discuss a simple model to aid in the exploration of
these ideas.

\begin{figure}[h!]
    \centering
    \renewcommand{\proton}[1]{%
    \shade[ball color=red!80!white, draw=black, line width=0.5pt] (#1) circle (.25);
    \draw[black] (#1) node{$+$};
}

\renewcommand{\neutron}[1]{%
    \shade[ball color=lime!70!green, draw=black, line width=0.5pt] (#1) circle (.25);
}

\renewcommand{\electron}[3]{%
    \draw[rotate = #3, color=gray!60!white, line width=0.8pt] (0,0) ellipse (#1 and #2);
    \shade[ball color=yellow!90!white, draw=black, line width=0.5pt] (0,#2)[rotate=#3] circle (.1);
}

\newcommand{\legendelectron}[1]{%
    \shade[ball color=yellow!90!white, draw=black, line width=0.5pt] (#1) circle (.1);
}

\renewcommand{\nucleus}{%
    \neutron{0.1,0.3}
    \proton{0,0}
    \neutron{0.3,0.2}
    \proton{-0.2,0.1}
    \neutron{-0.1,0.3}
    \proton{0.2,-0.15}
    \neutron{-0.05,-0.12}
    \proton{0.17,0.21}
}

\renewcommand{\inelastic}[2]{
  \proton{#1,#2};
  \draw[->,thick,cyan!80!white](#1+0.5,#2)--(4,-3.7); 
  \draw[->,thick,cyan!80!white](0,-3)--(-0.3,-4); 
  \shade[ball color=yellow] (-0.3,-4) circle (.1); 
}

\renewcommand{\elastic}[2]{
  \proton{#1,#2};
  \draw[->,thick,orange,bend right=90](#1+0.5,#2) to  [out=-30, in=-150] (4,3.);
}

\renewcommand{\protoncollision}[3]{
  \proton{#1,#2};
  \draw[->,thick,red](#1+0.5,#2)--(-0.5,0);%
  \draw[snake=coil, line after snake=0pt, segment aspect=0,%
    segment length=5pt,color=red!80!blue] (0,0)-- +(4,2)%
  node[fill=white!70!yellow,draw=red!50!white, below=.01cm,pos=1.]%
  {$\gamma$};%
  \draw[->,thick,red](#1+0.5,#2)--(-0.5,0);%
  \draw[->,thick,red](0.5,0)--(3.7,-1.8);%
  \neutron{4,-2};  
}

\begin{tikzpicture}[scale=0.5]
    \nucleus
    \electron{1.5}{0.75}{80}
    \electron{1.2}{1.4}{260}
    \electron{4}{2}{30}
    \electron{4}{3}{180}
    \protoncollision{-6.}{0.}{160}
    \inelastic{-6.}{-2.}
    \elastic{-6.}{2.}

      \begin{scope}[shift={(8,-1)}]        
        \draw [thick,rounded corners=2pt] (0,-4) rectangle (8,4); 
        \node at (4, 3.5) {\textbf{Legend}};        
        \proton{0.5, 3}
        \node[anchor=west, font=\footnotesize] at (1.5,3) {Proton}; 
        \neutron{0.5, 2}
        \node[anchor=west, font=\footnotesize] at (1.5,2) {Neutron};         
        \legendelectron{0.5, 1}
        \node[anchor=west, font=\footnotesize] at (1.5,1) {Electron};        
        \draw[->,thick,red] (0.5,0) -- +(1,0);
        \node[anchor=west, font=\footnotesize] at (1.5,0) {Nonelastic collision};        
        \draw[->,thick,cyan!80!white] (0.5,-1) -- +(1,0);
        \node[anchor=west, font=\footnotesize] at (1.5,-1) {Inelastic interaction};         
        \draw[->,thick,orange] (0.5,-2) -- +(1,0);
        \node[anchor=west, font=\footnotesize] at (1.5,-2) {Elastic interaction}; 
        \draw[snake=coil, line after snake=0pt, segment aspect=0,
          segment length=5pt,color=red!80!blue] (0.5,-3) -- +(1,0);
        \node[anchor=west, font=\footnotesize] at (1.5,-3) {prompt-$\gamma$ emission};
      \end{scope}
\end{tikzpicture}
    \caption{\em The three main interactions of a proton with matter.
      A \textcolor{red}{nonelastic} proton--nucleus collision, an
      \textcolor{cyan!80!white}{inelastic} Coulomb interaction with
      atomic electrons and \textcolor{orange}{elastic} Coulomb
      scattering with the nucleus.
      \label{fig:atom}
    }
\end{figure}
Consider a domain $\W \subset \reals^{d+1}$ for $d = 2, 3$, where $\W
= \W_{\vec{x}} \times \W_E$, with $\W_{\vec{x}} \subset \reals^d$
representing space and $\W_E = [E_{\min}, E_{\max}] \subset \reals$
energy.

\subsection{Inelastic scattering}

For $\alpha > 0$ and $p \in [1,2]$ we introduce
\begin{equation}\label{eq:BraggKleeman}
  S(E) = \frac{1}{\alpha p} E^{1-p},
\end{equation}
the Bragg--Kleeman formula \cite{BraggKleeman:1905}, which illustrates
how stopping power decreases with increasing energy. That is, $S$ is
monotonically decreasing. From the PDE perspective, this provides a
dissipative mechanism, motivating the definition
\begin{equation}
  \mu = -S'(E_{\min}) > 0.
\end{equation}

Empirical values for $p$ and $\alpha$ are provided in Table
\ref{tab:range_energy}, showing the variation of $\alpha$ across
different biological media, while $p$ remains relatively constant.
\begin{table}[h!]
  \centering
  \begin{tabular}{lrr}
    \toprule
    Medium &    $p$ &  $\alpha$ \\
    \midrule
    Water & 1.75$\pm$0.02 & 0.00246$\pm$0.00025 \\
    Muscle & 1.75 &            0.0021 \\
    Bone & 1.77 &            0.0011 \\
    Lung & 1.74 &            0.0033 \\
    \bottomrule
  \end{tabular}
  \caption{
    \label{tab:range_energy}
    Range--energy relationship parameters for different media. The
    parameter $p$ remains relatively constant across different
    biological media, while $\alpha$ varies significantly with
    density and composition.}
\end{table}

For fixed unitary angle $\vec \omega \in \mathbb{S}^{d-1}$, we treat
direction as a parameter rather than an independent variable, thereby
reducing the angular domain to a point. In the full Boltzmann
framework, $\vec \omega$ ranges over the unit sphere and the
Laplace--Beltrami operator acts on the angular variable. In this
setting, however, we consider $\vec \omega$ fixed and examine the
implications for modelling angular diffusion.

The operator $\Delta_{\vec \omega} u$ denotes the Laplace--Beltrami
operator on the sphere, which formally takes the form
\begin{equation}
  \Delta_{\vec \omega} u = \nabla_{\vec \omega} \cdot (\nabla_{\vec \omega} u),
\end{equation}
where the angular gradient is defined using the projection operator
\begin{equation}
  \mathcal{P} = \mathbf{I} - \vec{\omega} \otimes \vec{\omega},
\end{equation}
which projects vectors onto the tangent space of $\mathbb{S}^{d-1}$ at
$\vec{\omega}$. Specifically, for a function $u$, the projected
gradient is
\begin{equation}
  \nabla_{\vec \omega} u
  =
  \mathcal{P} \nabla_{\vec{x}} u
  =
  (\mathbf{I} - \vec{\omega} \otimes \vec{\omega}) \nabla_{\vec{x}} u
  =
  \nabla_{\vec{x}} u - \qp{\vec{\omega} \cdot \nabla_{\vec{x}} u} \vec \omega.
\end{equation}

\begin{remark}[2D projection identity]
  For $d = 2$, the projection of the gradient orthogonal to a unit
  vector $\vec{\omega}$ can be written compactly using the
  perpendicular vector $\vec{\omega}^\perp$, defined as  
  \begin{equation}
    \vec{\omega}^\perp = 
    \begin{pmatrix}
      -\omega_2 \\
      \omega_1
    \end{pmatrix}
    \quad \text{for} \quad
    \vec{\omega} = 
    \begin{pmatrix}
      \omega_1 \\
      \omega_2
    \end{pmatrix}.
  \end{equation}
  Then
  \begin{equation}
    (\mathbf{I} - \vec{\omega} \otimes \vec{\omega}) \nabla u
    =
    (\vec{\omega}^\perp \cdot \nabla u)\, \vec{\omega}^\perp,
  \end{equation}
  expressing the orthogonal projection of $\nabla u$ as a scalar
  multiple of a single direction.

  For $d = 3$, the orthogonal complement of $\vec \omega$ is
  two-dimensional, so no unique analogue of $\vec{\omega}^\perp$
  exists. The projection must instead be expressed in terms of a full
  basis for the plane orthogonal to $\vec \omega$.
\end{remark}

This representation assumes $u$ is smooth enough for all
quantities to be well defined. Fixing $\vec \omega$ eliminates the
angular domain and replaces the Laplace--Beltrami operator with an
artificial angular diffusion term involving an implicit boundary
condition. Since $\mathbb{S}^{d-1}$ is a compact manifold without
boundary, any notion of inflow or Dirichlet data on the angular domain
is induced entirely by this parametrisation.

Given $\vec \omega \in \mathbb{S}^{d-1}$, $f \in \leb{2}(\W)$ and $g
\in \leb{2}(\partial \W)$, we introduce the degenerate elliptic proton
transport problem: find the fluence $\psi \colon \W \to \reals$ such
that
\begin{equation}\label{eq:pde_classical_form}
  \begin{split}
    \vec{\omega} \cdot \nabla_{\vec{x}} \psi(\vec{x}, E)
    -
    \dfrac{\partial}{\partial E} \left( \mathscr S(E) \psi(\vec{x}, E) \right)
    - \epsilon \Delta_{\vec \omega} \psi(\vec x, E)
    &= 
    f \quad \text{in } \W, \\
    \psi &= g \quad \text{on } \Gamma_-.
  \end{split}
\end{equation}
The inflow boundary $\Gamma_- \subset \partial \W$ is given by
\begin{equation*}
  \Gamma_- 
  =
  \{\vec{x} \in \partial \W_{\vec{x}} : \vec{\omega} \cdot \vec{n}_{\vec x} < 0\}
  \cup
  \{\partial\W_E = E_{\max}\},
\end{equation*}
where $\vec{n}_{\vec x}$ denotes the outward unit normal vector to
$\partial \W_{\vec{x}}$. The problem domain is illustrated in
Figure~\ref{fig:domain}.

\begin{figure}[h!]
  \begin{center}
    \begin{tikzpicture}
    \def\zmax{5}
    \def\Emin{0.5}
    \def\Emax{2.5}
    \def\xmax{4}

    \def\shiftX{2.5}
    \def\shiftY{1.5}

    \draw[thick] (\shiftX, \Emin+\shiftY) -- (\shiftX+\zmax, \Emin+\shiftY) -- (\shiftX+\zmax, \Emax+\shiftY) -- (\shiftX, \Emax+\shiftY) -- cycle;

    \draw[thick] (0, \Emin) -- (\zmax, \Emin) -- (\zmax, \Emax) -- (0, \Emax) -- cycle;

    \draw[thick] (0, \Emin) -- (\shiftX, \Emin+\shiftY);
    \draw[thick] (\zmax, \Emin) -- (\zmax+\shiftX, \Emin+\shiftY);
    \draw[thick] (\zmax, \Emax) -- (\zmax+\shiftX, \Emax+\shiftY);
    \draw[thick] (0, \Emax) -- (\shiftX, \Emax+\shiftY);
    \node[below] at (0, \Emin) {\footnotesize 0};
    \node[below] at (\zmax, \Emin) {\footnotesize $z_{\text{max}}$};
    \node[left] at (0, \Emin) {\footnotesize $E_{\text{min}}$};
    \node[left] at (0, \Emax) {\footnotesize $E_{\text{max}}$};
    \node[below] at (\shiftX+0.5, \Emin+\shiftY) {\footnotesize $x_{\text{max}}$};

    \fill[orange, opacity=0.2] (0, \Emin) -- (0, \Emax) -- (\shiftX, \Emax+\shiftY) -- (\shiftX, \Emin+\shiftY) -- cycle;
    \fill[orange, opacity=0.2] (0, \Emax) -- (\zmax, \Emax) -- (\zmax+\shiftX, \Emax+\shiftY) -- (\shiftX, \Emax+\shiftY) -- cycle;  
    \node[orange] at (-0.3, 1.5) {\footnotesize $\partial X_-$}; 
    \node[orange] at (\zmax/2+1, \Emax+0.5) {\footnotesize $\partial X_-$};

\end{tikzpicture}    
    \caption{\label{fig:domain} Illustration of the domain and the
      relevant inflow boundary for $d = 2$ with $\vec \omega = (0,1)$, say.}
  \end{center}
\end{figure}
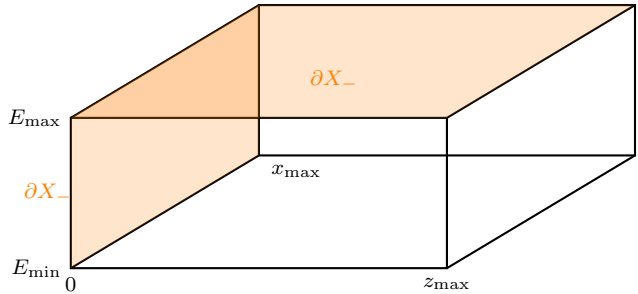

The analysis of this degenerate elliptic problem is challenging due to
the partial smoothing effects of the angular diffusion term, combined
with the advective behaviour. For example, while angular diffusion
regularises the solution in directions orthogonal to $\vec \omega$, the advection
term can propagate discontinuities along streamlines.

\begin{remark}[Physical interpretation]
  The proton transport equation models the evolution of fluence in
  both space and energy, incorporating the two dominant physical
  effects from Figure~\ref{fig:atom}:
  \begin{equation}\label{eq:pde_classical_annotated}
    \underbrace{
      \vec{\omega} \cdot \nabla_{\vec{x}} \psi
    }_{\text{Transport}}
    -
    \underbrace{
      \nabla_E \left( \mathscr S(E) \psi \right)
    }_{\text{\textcolor{cyan!80!white}{Inelastic Coulomb scattering}}}
    -
    \underbrace{
      \epsilon \Delta_{\vec \omega} \psi
    }_{\text{\textcolor{orange}{Elastic Coulomb scattering}}}
    =
    f \quad \text{in } \W.
  \end{equation}
  The first term represents directional transport, describing how
  protons travel in straight lines through the medium along a fixed
  direction.

  The second term accounts for energy loss due to inelastic Coulomb
  interactions, where protons transfer energy to bound electrons in
  the material. We model this dissipation using the Bragg--Kleeman
  stopping power formulation, a simple representation of the
  continuous slowing down of protons. Other models, such as
  Bethe--Bloch, are compatible with the framework but are not
  considered further here.

  The third term represents elastic Coulomb scattering, corresponding
  to angular deflections caused by interactions with atomic nuclei. This
  produces a diffusion-like effect in the angular variable, capturing
  the gradual broadening of the beam from small-angle scattering.

  The source term allows for an external input of protons within the
  domain, while the inflow boundary condition accounts for incident
  protons entering the system, either from a physical boundary or a
  prescribed beam source.
\end{remark}

\subsection{Weak formulation}

We work in the space $\sobh1(\W)$ and impose the inflow boundary
condition weakly through the bilinear and linear forms.  This space
allows a variational formulation of problem
\eqref{eq:pde_classical_form}: Find $\psi \in \sobh1(\W)$ such that
\begin{equation}\label{eq:pde_variational_form}
  \mathcal A(\psi, v)
  +
  \mathcal B(\psi, v)
  =
  l(v) \Foreach v \in \sobh1(\W),
\end{equation}
where 
\begin{equation}\label{eq:a_bilinear}
  \mathcal A(u, v)
  =
  \int_{\W}
  \epsilon
  \nabla_{\vec \omega} u \cdot \nabla_{\vec \omega} v
  \d\vec{x} \, \d E,
\end{equation}
\begin{equation}\label{eq:b_bilinear}
  \mathcal B(u, v)
  =
  \int_{\W}
  \qp{\vec{\omega} \cdot \nabla_{\vec{x}}  u 
    -
    \nabla_E \qp{\mathscr{S}(E) u}
  }v
  \d\vec{x} \, \d E
  -
  \frac 12\int_{\Gamma_-} \qp{\vec \omega \cdot \vec n_{\vec x} - \mathscr S(E) n_E} u v \d s,
\end{equation}
and 
\begin{equation}\label{eq:l_linear}
  l(v)
  =
  \int_{\W}fv   \d\vec{x} \, \d E
  -
  \frac 12\int_{\Gamma_-} \qp{\vec \omega \cdot \vec n_{\vec x} - \mathscr S(E) n_E} g v \d s.
\end{equation}

\begin{remark}[Boundary conditions and projected diffusion]
  \label{rem:boundary}
  The inflow boundary condition is imposed weakly through the bilinear
  form $\mathcal{B}$ and linear form $l$. This inflow condition allows
  us to account for prescribed incident particles entering the domain,
  typically from a beam source.
  
  The diffusion operator acts only in directions orthogonal to $\vec
  \omega$. Since no boundary condition is imposed explicitly in those
  directions, we must assume a homogeneous Neumann condition
  \begin{equation}
    \nabla_{\vec \omega} \psi \cdot \vec n_{\vec x}
    =
    0 \quad \text{on } \Gamma^\perp(\vec \omega) \subset \partial \W_{\vec x},
  \end{equation}
  where
  \begin{equation}
    \Gamma^\perp(\vec \omega)
    =
    \ensemble{
      \vec x \in \partial \W_{\vec x}
    }{
      \mathcal{P} \vec n_{\vec x} \ne \vec 0
    }
    \quad \text{with } \mathcal{P} = \mathbf{I} - \vec \omega \otimes \vec \omega.
  \end{equation}
  This identifies the portion of the boundary where the outward normal
  has a nonzero component orthogonal to $\vec \omega$ and thus where
  transverse diffusion may induce a nontrivial normal flux.

  Physically, this corresponds to assuming that elastic Coulomb
  scattering, which induces transverse broadening of the beam, does
  not result in any loss of particles through the boundary in the
  directions orthogonal to transport. That is, particles scatter
  within the medium but do not escape through the sides. This is
  consistent with the interpretation of the domain as a confined
  spatial region in which the beam evolves.

  For $d=2$, this condition reduces to checking whether $\vec n_{\vec
    x} \cdot \vec \omega^\perp \ne 0$. For example, on a square
  spatial domain if $\vec \omega = (1,0)$, then $\vec \omega^\perp =
  (0,1)$, and the Neumann condition becomes $\partial_{x_2} \psi = 0$
  on the top and bottom edges of the domain, where the outer normal
  has nonzero vertical component.
\end{remark}

\begin{remark}[Regularising effects and hypocoercivity]
  The angular diffusion term contributes to improved regularity of
  solutions by penalising high-frequency oscillations orthogonal to
  the transport direction. When combined with the advection term, the
  system exhibits a property known as \emph{hypocoercivity}, a
  mechanism by which the interplay between transport and dissipation
  leads to decay to equilibrium and enhanced regularity under suitable
  conditions \cite{villani2009hypocoercivity}. This effect is
  particularly relevant in degenerate elliptic problems, where
  coercivity may be lacking in some directions, but regularity is
  recovered through the coupling of transport and diffusion
  \cite{georgoulis2021hypocoercivity,porretta2017numerical}. See also
  \cite{pim2024optimal} for a discussion of hypocoercivity in the
  context of charged particle transport.
\end{remark}

\begin{remark}[Problems satisfying a priori bounds]
  For some choices of $\mathscr S$ and $\epsilon = 0$, problem
  \eqref{eq:pde_classical_form} can be solved analytically using the
  method of characteristics \cite{ashby2025efficient}. When $\epsilon
  > 0$, the angular diffusion term provides a regularising effect that
  suppresses transverse oscillations, but the problem is no longer
  analytically tractable in general. In either case, the structure of
  the equation allows for the derivation of useful \emph{a priori}
  bounds on $\psi$, depending on the data $f$ and $g$.
\end{remark}

\begin{remark}[Maximum principle]
  The solution to problem \eqref{eq:pde_classical_form} satisfies a
  maximum principle. Specifically, if $f = 0$ and $g \geq 0$, then the
  solution is nonnegative and bounded above by the incoming boundary
  data
  \begin{equation*}
    0
    \leq \psi(\vec{x}, E)
    \leq \sup_{\Gamma_-} g, \quad \forall (\vec{x}, E) \in \W.
  \end{equation*}
  This follows from the directional structure of the equation: the
  advection term transports information along characteristics from
  $\Gamma_-$ and the diffusion operator $\epsilon \Delta_{\vec
    \omega}$ acts only in directions orthogonal to transport, without
  introducing any additional source of negativity. In this setting,
  $\vec \omega$ is treated as a fixed parameter and no boundary
  condition is imposed in the directions where diffusion acts.

  This property is important to ensure nonnegativity of the fluence
  and is essential for the physical interpretability of the model;
  see, e.g., \cite{renardy2006introduction}. It also forms the core
  principle motivating the design of our numerical scheme in the
  following section.
\end{remark}

\section{SUPG discretisation}
\label{sec:supg}

We now introduce a stabilised finite element method for the
variational problem \eqref{eq:pde_variational_form}. The key idea is
to retain the diffusion term in its standard Galerkin form, while
modifying the treatment of the advective terms using a Streamline
Upwind Petrov--Galerkin (SUPG) approach. This stabilisation introduces
a consistent perturbation to the test space along the characteristic
direction of transport, enhancing stability and suppressing
nonphysical oscillations in the numerical solution. The method
preserves the variational structure of the problem and is compatible
with the weak enforcement of boundary conditions described previously.

Let $\mathcal{T}$ be a regular subdivision of $\W$ into disjoint
simplicial or box-type (quadrilateral/hexahedral) elements $K$, with
$\bar{\W} = \bigcup_{K \in \mathcal{T}} \bar{K}$. We assume
$\mathcal{T}$ is shape-regular and that the elemental faces are
straight planar segments. Let $h_K = \text{diam}(K)$, and define the
piecewise constant meshsize function $h$ by $h|_K = h_K$.

For $k \in \mathbb{N}$, we define the continuous finite element space
\begin{equation}
  \fes
  :=
  \ensemble{v_h \in \leb2(\W)}{v_h\vert_{K} \in \mathbb{P}^k(K) \Foreach K \in \mathcal{T}} \cap \cont{0}(\W),
\end{equation}
and denote the set of Lagrange nodes by $\{\vec x_i\}_{i=1}^{\dim\fes}$.

\subsection{Stabilised formulation}

For $u_h, v_h \in \fes$, define the transport operator
\begin{equation}
  \cL(u_h)
  :=
  \vec{\omega} \cdot \nabla_{\vec{x}} u_h - \nabla_E \qp{\mathscr{S}(E) u_h},
\end{equation}
and consider the stabilised bilinear form
\begin{equation}
  \begin{split}
    \mathcal{B}_h(u_h, v_h)
    &:=
    \int_\W \cL(u_h) v_h \d \vec x \d E
    -
    \frac 12\int_{\Gamma_-} \qp{\vec \omega \cdot \vec n_{\vec x} - \mathscr S(E) n_E} u_h v_h \d s
    \\
    &\quad +
    \sum_{K \in \mathcal{T}}
    \int_K
    \delta_K \cL(u_h) \cL(v_h)
    \d \vec x \d E
    \\
    &=
    \int_\W
    \qp{ \vec{\omega} \cdot \nabla_{\vec{x}} u_h
      -
      \nabla_E(\mathscr{S}(E) u_h) } 
    v_h
    \d\vec{x} \d E
    \\
    &\quad -
    \frac 12 \int_{\Gamma_-} \qp{\vec \omega \cdot \vec n_{\vec x} - \mathscr S(E) n_E} u_h v_h \d s
    \\
    &\quad +
    \sum_{K \in \mathcal{T}}
    \int_K
    \delta_K
    \qp{\vec{\omega} \cdot \nabla_{\vec{x}} u_h - \nabla_E (\mathscr{S}(E) u_h)}
    \qp{\vec{\omega} \cdot \nabla_{\vec{x}} v_h - \nabla_E (\mathscr{S}(E) v_h)}
    \d\vec{x} \d E.
  \end{split}
\end{equation}
Here, $\delta_K$ is a stabilisation parameter chosen based on the
local mesh size and characteristic transport speed to be specified
below.

The stabilised finite element method then reads: find $\psi_h \in
\fes$ such that
\begin{equation}
  \label{eq:SUPGsoln}
  \mathcal A(\psi_h, v_h)
  +
  \mathcal B_h(\psi_h, v_h)
  =
  l(v_h)
  \Foreach v_h \in \fes.
\end{equation}

\begin{remark}[Positivity in deterministic radiation transport simulations]
  Even in simplified settings, standard finite element solvers may
  fail to produce physically meaningful, nonnegative solutions
  \cite{ashby2025nodally}. This issue becomes more pronounced in
  realistic configurations \cite{stammer2024deterministic}; see also
  the example in \S\ref{sec:numerics}. In the next section, we
  describe a modification of the SUPG scheme \eqref{eq:SUPGsoln} that
  enforces positivity at the degrees of freedom, using techniques
  developed in \cite{ashby2025nodally}.
\end{remark}

\subsection{A positivity-preserving scheme}

Let $\{\vec x_i\}_{i=1}^N$ denote the Lagrange nodes of $\fes$. We
define the convex subset $K_h \subset \fes$ consisting of all
functions whose nodal values lie within the physical bounds:
\begin{equation}
  K_h 
  := 
  \ensemble{v_h \in \fes}{v_h(\vec x_i) \in [0, \sup_{\Gamma_-} g] \Foreach i = 1, \dots, N}.
\end{equation}

\begin{remark}[Preservation of bounds at the degrees of freedom]
  For polynomial degree 1, functions in $K_h$ are guaranteed to
  satisfy the upper and lower bounds pointwise, as they are linearly
  interpolated between nodal values. For polynomial degree 2 or
  higher, this is no longer the case: functions in $K_h$ are only
  \emph{nodally} bound-preserving. This notion was studied in
  \cite{barrenechea2024nodally} for second-order elliptic problems,
  where nodal bound preservation was achieved via a nonlinear
  stabilised method. There, the bound-preserving discrete solution was
  shown to coincide with the solution of a discrete variational
  inequality.
\end{remark}

We now seek a solution $\psi_h^+ \in K_h$ satisfying the inequality
\begin{equation}
  \label{eq:discrete_inequality}
  \mathcal A(\psi_h^+, v_h - \psi_h^+)
  +
  \mathcal B_h(\psi_h^+, v_h - \psi_h^+)
  \geq
  l(v_h - \psi_h^+) \quad \Foreach v_h \in K_h.
\end{equation}
By construction, $\psi_h^+ \in K_h$ satisfies a nodal analogue of the
maximum principle, preserving nonnegativity and an upper bound
determined by the inflow data.

We proceed to analyse the method from an \emph{a priori} perspective.

\begin{lemma}[Consistency]
  \label{lem:consistency}
  Let $\psi \in \sobh2(\W)$ solve \eqref{eq:pde_variational_form} and
  satisfy $\psi = g$ pointwise on $\Gamma_-$. Then
  \begin{equation}
    \mathcal A(\psi, v_h)
    +
    \mathcal B_h(\psi, v_h)
    =
    l(v_h)
    \Foreach v_h \in \fes.
  \end{equation}
\end{lemma}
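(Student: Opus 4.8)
The plan is to check that the exact solution reproduces the discrete identity term by term, isolating the streamline stabilisation as the only genuinely new ingredient. I would begin by noting that $\fes \subset \sobh1(\W)$, so the continuous weak formulation \eqref{eq:pde_variational_form} may be tested directly against any $v_h \in \fes$, yielding $\mathcal A(\psi, v_h) + \mathcal B(\psi, v_h) = l(v_h)$ with no further argument. Since $\mathcal B_h$ differs from $\mathcal B$ only by the elementwise term $\mathcal S_h(\psi, v_h) := \sum_{K} \int_K \delta_K\, \cL(\psi)\,\cL(v_h) \d\vec x \d E$, the claim reduces to controlling this stabilisation contribution. The inflow integrals require no extra care: because $\psi = g$ pointwise on $\Gamma_-$, the boundary terms in $\mathcal B(\psi, \cdot)$ and in $l(\cdot)$ carry the identical weight $\tfrac12(\vec\omega\cdot\vec n_{\vec x} - \mathscr S(E) n_E)$ and cancel, so the weak enforcement of the boundary data is automatically consistent.

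For the stabilisation term I would exploit the extra regularity $\psi \in \sobh2(\W)$ to recover the strong form of the equation. Testing \eqref{eq:pde_variational_form} against functions compactly supported in a single element and integrating the diffusion term by parts shows that $\psi$ satisfies $\cL(\psi) - \epsilon \Delta_{\vec\omega}\psi = f$ almost everywhere, so that on each $K$ one has $\cL(\psi) = f + \epsilon\Delta_{\vec\omega}\psi$. The $\sobh2$ regularity is precisely what makes $\Delta_{\vec\omega}\psi \in \leb2(\W)$ a well-defined function, legitimises the elementwise integration by parts, and guarantees that interelement normal fluxes cancel by continuity so that no jump terms appear. Substituting this identity, the stabilisation becomes a streamline-weighted expression in the data; recognising it as the residual-based contribution built on $\cL(\psi) - \epsilon\Delta_{\vec\omega}\psi - f$, whose integrand vanishes a.e. by the strong form, shows that the stabilisation makes no net contribution and the discrete identity closes to the already-established Galerkin identity.

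The step I expect to be the main obstacle is exactly this treatment of the stabilisation: one must justify passing from the $\sobh2$ weak solution to the pointwise strong equation on each element, verify that the implicit homogeneous Neumann condition of Remark~\ref{rem:boundary} on $\Gamma^\perp(\vec\omega)$ is the boundary data genuinely produced by the diffusion integration by parts, and confirm that the residual structure of the streamline term leaves no uncancelled piece against the data encoded in $l$ and in the scheme \eqref{eq:SUPGsoln}. By contrast, the Galerkin part and the boundary cancellation are immediate consequences of $\fes \subset \sobh1(\W)$ and of $\psi = g$ on $\Gamma_-$ respectively, and require no new estimates.
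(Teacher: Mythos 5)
Your overall route is the same as the paper's: test the continuous formulation \eqref{eq:pde_variational_form} with $v_h \in \fes \subset \sobh1(\W)$, use the $\sobh2$ regularity to recover the strong form, and reduce everything to showing that the stabilisation term --- the only new ingredient in $\mathcal B_h$ --- vanishes at $\psi$. Your version is in fact slightly cleaner on the Galerkin part: by testing the weak form directly you avoid the paper's integration by parts of $\mathcal A$, so you never need to invoke the Neumann condition on $\Gamma^\perp(\vec\omega)$ (the obstacle you flag there is not actually an obstacle on your route, since strong-form recovery with compactly supported test functions requires no boundary data).

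However, the decisive step --- the vanishing of the stabilisation --- contains a genuine gap, located exactly in your phrase ``recognising it as the residual-based contribution built on $\cL(\psi) - \epsilon\Delta_{\vec\omega}\psi - f$.'' The scheme's stabilisation is \emph{not} built on the full residual: $\mathcal B_h$ contains $\sum_K \int_K \delta_K\, \cL(u_h)\,\cL(v_h)$ with $\cL$ the first-order transport operator alone, and $l$ contains no compensating term $\sum_K \int_K \delta_K\, f\,\cL(v_h)$. Substituting the strong form $\cL(\psi) = f + \epsilon\Delta_{\vec\omega}\psi$, as you propose, yields
\begin{equation*}
\sum_{K\in\mathcal T}\int_K \delta_K\,\cL(\psi)\,\cL(v_h)\,\d\vec x\,\d E
=
\sum_{K\in\mathcal T}\int_K \delta_K\,\qp{f + \epsilon\Delta_{\vec\omega}\psi}\,\cL(v_h)\,\d\vec x\,\d E,
\end{equation*}
which is not zero in general: it vanishes only when $f + \epsilon\Delta_{\vec\omega}\psi = 0$ almost everywhere, e.g.\ in the homogeneous pure-transport case $\epsilon = 0$, $f = 0$. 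For $\epsilon > 0$ or $f \ne 0$, the identity you want would require either redefining the stabilisation to use the full residual $\cL(u_h) - \epsilon\Delta_{\vec\omega}u_h$ while augmenting $l$ by $\sum_K \int_K \delta_K\, f\,\cL(v_h)$, or accepting an $\mathcal O(\max_K \delta_K)$ consistency defect rather than exact consistency. You should know that the paper's own proof makes precisely the same leap --- it asserts that ``the residual of $\cL(\psi)$ vanishes pointwise'' and then drops the term $\sum_K \int_K \delta_K\,\cL(\psi)\,\cL(v_h)$ --- so your write-up faithfully reproduces the published argument; but in both, the cancellation is asserted rather than proved, and as stated it fails for the general data $(\epsilon, f)$ that the lemma claims to cover.
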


\begin{proof}
  Since $\psi \in \sobh2(\W)$ and $\nabla_{\vec \omega} = \mathcal{P} \nabla_{\vec x}$, we may integrate by parts in the angular diffusion term:
  \begin{equation}
    \begin{split}
      \mathcal{A}(\psi, v_h)
      &=
      \epsilon
      \int_\W
      \nabla_{\vec \omega} \psi \cdot 
      \nabla_{\vec \omega} v_h
      \d\vec{x} \, \d E
      \\
      &=
      -\epsilon
      \int_\W
      \Delta_{\vec \omega} \psi \, v_h \,
      \d\vec{x} \, \d E
      +
      \epsilon
      \int_{\Gamma^\perp(\vec \omega)}
      (\nabla_{\vec \omega} \psi \cdot \vec{n}_{\vec x}) \, v_h
      \d s.
    \end{split}
  \end{equation}
  The integration by parts identity follows from standard divergence
  theory applied to projected gradients; see, e.g.,
  \cite[§1.9]{ern2004theory}.

  The boundary term vanishes due to the homogeneous Neumann condition
  \begin{equation}
    \nabla_{\vec \omega} \psi \cdot \vec n_{\vec x} = 0
    \quad \text{on } \Gamma^\perp(\vec \omega),
  \end{equation} 
  as discussed in Remark \ref{rem:boundary} above. Thus,
  \begin{equation}
    \mathcal{A}(\psi, v_h)
    =
    -\epsilon \int_\W \Delta_{\vec \omega} \psi \, v_h \d \vec x \d E.
  \end{equation}

  For the transport terms, since $\psi$ solves the PDE in the strong
  form, the residual of $\cL(\psi)$ vanishes pointwise. Therefore, the
  SUPG stabilisation term gives no contribution,
  \begin{equation}
    \sum_{K \in \mathcal{T}} \int_K \delta_K \cL(\psi) \cL(v_h) = 0.
  \end{equation}
 For the transport terms, since $\psi$ solves the PDE in the strong
  form, the residual of $\cL(\psi)$ vanishes pointwise. Therefore, the
  SUPG stabilisation term gives no contribution
  \begin{equation}
    \mathcal B_h(\psi, v_h)
    =
    \int_\W \cL(\psi) v_h \d \vec x \d E
    +
    \sum_{K \in \mathcal{T}} \int_K \delta_K \cL(\psi) \cL(v_h) \d \vec x \d E
    =
    \int_\W \cL(\psi) v_h \d \vec x \d E.
  \end{equation}
  Finally, since $\psi$ satisfies the variational formulation
  \eqref{eq:pde_variational_form}, we conclude that
  \begin{equation}
    \mathcal A(\psi, v_h) + \mathcal B_h(\psi, v_h) = l(v_h) \quad \Foreach v_h \in \fes.
  \end{equation}
\end{proof}

A natural notion of error when considering the problem
\eqref{eq:SUPGsoln} is the energy norm, defined by
\begin{equation}
  \begin{split}
    \enorm{u_h}^2
    &:=
    \epsilon \Norm{\nabla_{\vec{\omega}} u_h}_{\leb2(\W)}^2
    +
    \mu \Norm{u_h}_{\leb2(\W)}^2
    +
    \sum_{K \in \mathcal{T}}
    \Norm{\delta_K^{1/2} \cL(u_h)}_{\leb{2}(K)}^2
    +
    \frac 12 \int_{\Gamma^+}
    \qp{ \vec \omega \cdot \vec n_{\vec x} - \mathscr S(E) n_E } u_h^2 \d s.
  \end{split}
\end{equation}
Note that this is indeed a norm since $\vec \omega \cdot \vec n_{\vec
  x} - \mathscr S(E) n_E \geq 0$ on $\Gamma^+$.

\begin{lemma}[Continuity and coercivity]
  \label{lem:dg-coerc}
  Let $u_h\in \fes$. Then
  \begin{equation}
    \label{eq:dg-coerc-one}
    \mathcal A(u_h, u_h)
    +
    \mathcal B_h(u_h, u_h)
    \geq
    \enorm{u_h}^2.
  \end{equation}
  Furthermore, let
  \begin{equation}
    \enorm{u_h}_*^2
    :=
    \enorm{u_h}^2
    +
    \sum_{K\in\mathcal T}
    \delta_K^{-1} \Norm{u_h}_{\leb{2}(K)}^2
  \end{equation}
  Then, for $u_h,v_h\in \fes$
  \begin{equation}
    \mathcal A(u_h, v_h)
    +
    \mathcal B_h(u_h, v_h)
    \leq
    C_B
    \enorm{u_h}_*
    \enorm{v_h}.
  \end{equation}
\end{lemma}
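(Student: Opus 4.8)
The plan is to prove both statements by expanding $\mathcal A + \mathcal B_h$ into its constituent pieces and estimating each, obtaining coercivity by testing against $v_h = u_h$ and continuity by a term-by-term Cauchy--Schwarz argument. The diffusion form and the stabilisation term will match entries of the energy norm almost verbatim, so the real content in both cases lies in the first-order transport contribution.

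For coercivity, setting $v_h = u_h$ the form $\mathcal A$ contributes exactly $\epsilon \Norm{\nabla_{\vec \omega} u_h}_{\leb2(\W)}^2$ and the SUPG sum contributes exactly $\sum_K \Norm{\delta_K^{1/2}\cL(u_h)}_{\leb2(K)}^2$, matching the corresponding terms of $\enorm{u_h}^2$. The only work is the transport contribution $\int_\W \cL(u_h) u_h \d\vec x \d E - \tfrac12\int_{\Gamma_-}\qp{\vec\omega\cdot\vec n_{\vec x} - \mathscr S n_E} u_h^2 \d s$. Writing $\cL(u_h)$ as the $(\vec x, E)$-divergence of the flux $(\vec\omega u_h, -\mathscr S u_h)$ and integrating by parts symmetrically, the skew-symmetric part cancels and I am left with $-\tfrac12 \int_\W \mathscr S'(E) u_h^2 + \tfrac12\int_{\partial\W}\qp{\vec\omega\cdot\vec n_{\vec x} - \mathscr S n_E} u_h^2 \d s$. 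Subtracting the inflow boundary term carried by $\mathcal B_h$ splits $\partial\W$ as $\Gamma_- \cup \Gamma^+$ and leaves precisely the outflow term $\tfrac12\int_{\Gamma^+}\qp{\vec\omega\cdot\vec n_{\vec x} - \mathscr S n_E} u_h^2 \d s$ of the energy norm. It then remains to absorb the volume term: since $S$ is monotonically decreasing we have $\mathscr S' \le 0$, so $-\tfrac12\int_\W \mathscr S' u_h^2 \ge 0$, and the size of $\mathscr S'$ encoded in $\mu$ lets me bound this below by $\mu\Norm{u_h}_{\leb2(\W)}^2$, completing \eqref{eq:dg-coerc-one}.

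For continuity I estimate the four contributions separately. The form $\mathcal A$ obeys $\mathcal A(u_h,v_h)\le \enorm{u_h}\enorm{v_h}$ by an $\epsilon$-weighted Cauchy--Schwarz, and the stabilisation term obeys $\sum_K\int_K\delta_K\cL(u_h)\cL(v_h)\le \enorm{u_h}\enorm{v_h}$ after a $\delta_K$-weighted Cauchy--Schwarz. The delicate term is $\int_\W\cL(u_h)v_h$: estimating it directly pairs $\delta_K^{1/2}\cL(u_h)$ with $\delta_K^{-1/2}v_h$ and would require the stronger norm on $v_h$, so instead I integrate by parts to transfer the transport operator onto $v_h$. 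This produces $-\int_\W u_h\cL(v_h)$, which pairs $\delta_K^{-1/2}u_h$ with $\delta_K^{1/2}\cL(v_h)$ and is bounded by $\enorm{u_h}_*\enorm{v_h}$ --- this is exactly the role of the extra term $\sum_K\delta_K^{-1}\Norm{u_h}_{\leb2(K)}^2$ in $\enorm{u_h}_*$ and explains the asymmetry in the statement --- together with a zeroth-order term $-\int_\W\mathscr S' u_h v_h$ controlled by $\mu\Norm{u_h}_{\leb2(\W)}\Norm{v_h}_{\leb2(\W)}$ using $\abs{\mathscr S'}\le\mu$, plus boundary contributions over $\partial\W$.

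The step I expect to be the main obstacle is the treatment of these boundary terms. After recombining with the inflow term from $\mathcal B_h$, the outflow part over $\Gamma^+$ is controlled directly by the boundary entry of the energy norm through a weighted Cauchy--Schwarz, but the residual inflow part over $\Gamma_-$ is not measured by $\enorm{\cdot}$. I plan to control it with a discrete trace inequality $\Norm{w_h}_{\leb2(\partial K)}^2 \le C h_K^{-1}\Norm{w_h}_{\leb2(K)}^2$ which, under the standard scaling $\delta_K\sim h_K$ and boundedness of $\vec\omega\cdot\vec n_{\vec x}-\mathscr S n_E$, lets me absorb the $u_h$-factor into $\sum_K\delta_K^{-1}\Norm{u_h}_{\leb2(K)}^2 \le \enorm{u_h}_*^2$ and the $v_h$-factor into $\mu\Norm{v_h}_{\leb2(\W)}^2 \le \enorm{v_h}^2$. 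Collecting the four estimates and taking $C_B$ to be the largest resulting constant yields the claimed bound.
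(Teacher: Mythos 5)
Your coercivity argument coincides with the paper's: expand $\int_\W \cL(u_h)u_h\d\vec x\d E$ by the product rule, integrate by parts symmetrically, recombine with the weakly imposed inflow term to leave the $\Gamma^+$ contribution, and bound $-\int_\W \mathscr S'(E)u_h^2\d\vec x\d E$ below using the sign of $\mathscr S'$. (Your factor $\tfrac12$ in front of this term is in fact the correct bookkeeping, the paper's display drops it; and note that bounding it below by $\mu\Norm{u_h}_{\leb2(\W)}^2$ requires $-\mathscr S'\geq\mu$ on all of $\W_E$, while for Bragg--Kleeman $\mu=-\mathscr S'(E_{\min})$ is the \emph{maximum} of $-\mathscr S'$ --- you gloss this exactly as the paper does, so it is a shared, not a new, defect.) Where you genuinely diverge is the continuity of the first-order term. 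You integrate by parts to move $\cL$ onto $v_h$, pairing $\delta_K^{-1/2}u_h$ with $\delta_K^{1/2}\cL(v_h)$, with the zeroth-order remainder $-\int_\W\mathscr S'u_hv_h$ controlled via $\abs{\mathscr S'}\leq\mu$ (which \emph{is} valid for Bragg--Kleeman with the paper's $\mu$). This is the standard SUPG duality trick, and it is what actually delivers the asymmetric bound $\enorm{u_h}_*\enorm{v_h}$ as stated. The paper instead estimates $\int_\W\cL(u_h)v_h$ directly, pairing $\delta_K^{1/2}\cL(u_h)$ with $\delta_K^{-1/2}v_h$, which produces $\enorm{u_h}\,\enorm{v_h}_*$ --- the star on the wrong argument relative to the lemma and to its use in Theorem~\ref{thm:cea}, where the second argument is discrete and its starred norm is not available. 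On this point your route does not merely differ; it repairs the paper's proof.

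The step you yourself flagged as the obstacle is, however, a genuine gap, and your plan does not close it. For the residual inflow term $\int_{\Gamma_-}\qp{\vec\omega\cdot\vec n_{\vec x}-\mathscr S n_E}u_hv_h\d s$, the discrete trace inequality gives $\Norm{v_h}_{\leb2(\Gamma_-)}^2\leq C\sum_K h_K^{-1}\Norm{v_h}_{\leb2(K)}^2$, and absorbing this factor into $\mu\Norm{v_h}_{\leb2(\W)}^2$ requires $h_K^{-1}\lesssim\mu$, which fails under mesh refinement. With $\delta_K\sim h_K$ the honest outcome of your pairing is $\enorm{u_h}_*\bigl(\sum_K\delta_K^{-1}\Norm{v_h}_{\leb2(K)}^2\bigr)^{1/2}$, i.e.\ again the starred norm of $v_h$, not $\enorm{v_h}$. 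You should know the paper's own treatment has the same defect: its displayed chain replaces the factor $\delta_K^{-1/2}\Norm{v_h}_{\leb2(K)}$ coming from the trace inverse inequality by $\delta_K^{1/2}\Norm{v_h}_{\leb2(K)}$, silently gaining a power of $\delta_K$. A clean repair is structural rather than technical: impose the inflow condition with coefficient $1$ instead of $\tfrac12$ (or add an inflow penalty), so that the coercivity computation generates the weighted inflow trace $\tfrac12\int_{\Gamma_-}\abs{\vec\omega\cdot\vec n_{\vec x}-\mathscr S n_E}\,u_h^2\d s$ inside $\enorm{\cdot}$, and include the corresponding trace term in $\enorm{\cdot}_*$ for the first argument (it is interpolation-accessible). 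With that modification both boundary factors are controlled with $h$-independent constants and your argument closes; without it, neither your proof nor the paper's does.
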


\begin{proof}
  By definition we have
  \begin{equation}
    \begin{split}
      \mathcal{B}_h(u_h, u_h)
      &=
      \int_\W \cL(u_h) u_h \d \vec x \d E
    -
    \int_{\Gamma_-} \qp{ \vec \omega \cdot \vec n_{\vec x} - \mathscr S(E) n_E } u_h^2 \d s
    +
    \sum_{K \in \mathcal{T}} \Norm{ \delta_K^{1/2} \cL(u_h) }_{\leb2(K)}^2.
    \end{split}
  \end{equation}
  We expand the first term using the product rule
  \begin{equation}
    \nabla_E(\mathscr S(E) u_h)
    =
    \mathscr S'(E) u_h + \mathscr S(E) \nabla_E u_h.
  \end{equation}
  Then,
  \begin{equation}
    \begin{split}
      \int_\W \cL(u_h) u_h \d \vec x \d E
      &=
      \int_\W \vec \omega \cdot \nabla_{\vec x} u_h \, u_h \d \vec x \d E
      -
      \int_\W \qp{ \mathscr S'(E) u_h + \mathscr S(E) \nabla_E u_h } u_h \d \vec x \d E
      \\
      &=
      \frac{1}{2} \int_{\Gamma} \vec \omega \cdot \vec n_{\vec x} \, u_h^2 \d s
    -
    \frac{1}{2} \int_{\Gamma} \mathscr S(E) n_E \, u_h^2 \d s
    -
    \int_\W \mathscr S'(E) u_h^2 \d \vec x \d E.
  \end{split}
\end{equation}
Therefore,
\begin{equation}
  \begin{split}
    \mathcal{B}_h(u_h, u_h)
    &=
    \sum_{K \in \mathcal{T}} \Norm{ \delta_K^{1/2} \cL(u_h) }_{\leb2(K)}^2
    +
    \frac 12 \int_{\Gamma^+} \qp{ \vec \omega \cdot \vec n_{\vec x} - \mathscr S(E) n_E } u_h^2 \d s
    -
    \int_\W \mathscr S'(E) u_h^2 \d \vec x \d E.
  \end{split}
\end{equation}

The term involving $\mathscr S'(E)$ is strictly negative and we recall
that $\mu := -\mathscr S'(E_{\min}) > 0$. Therefore, using the
monotonicity of $\mathscr S$, we have
\begin{equation}
  \int_\W \mathscr S'(E) u_h^2 \d \vec x \d E \leq -\mu \Norm{u_h}_{\leb2(\W)}^2.
\end{equation}
The angular diffusion term contributes
\begin{equation}
  \mathcal{A}(u_h, u_h) = \epsilon \Norm{ \nabla_{\vec \omega} u_h }_{\leb2(\W)}^2.
\end{equation}
Combining all terms, we find
\begin{equation}
  \mathcal{A}(u_h, u_h) + \mathcal{B}_h(u_h, u_h)
  \geq
  \epsilon \Norm{ \nabla_{\vec \omega} u_h }_{\leb2(\W)}^2
  +
  \mu \Norm{u_h}_{\leb2(\W)}^2
  +
  \sum_K \Norm{ \delta_K^{1/2} \cL(u_h) }_{\leb2(K)}^2
  +
  \int_{\Gamma^+} \qp{ \vec \omega \cdot \vec n_{\vec x} - \mathscr S(E) n_E } u_h^2 \d s,
\end{equation}
showing coercivity.
  
For continuity, each term is estimated separately. For the angular
diffusion term,
\begin{equation}
  \mathcal{A}(u_h, v_h)
  =
  \epsilon \int_\W \nabla_{\vec \omega} u_h \cdot \nabla_{\vec \omega} v_h \d \vec x \d E
  \leq
  \epsilon
  \Norm{ \nabla_{\vec \omega} u_h }_{\leb2(\W)}
  \Norm{ \nabla_{\vec \omega} v_h }_{\leb2(\W)}.
\end{equation}
For the transport term, we use Cauchy--Schwarz on each element 
\begin{equation}
  \begin{split}
    \int_\W \cL(u_h) v_h
    =
    \sum_{K \in \mathcal{T}} \int_K \cL(u_h) v_h
    &\leq
    \sum_K
    \Norm{ \delta_K^{1/2} \cL(u_h) }_{\leb2(K)}
    \delta_K^{-1} \Norm{ v_h }_{\leb2(K)}.
    \\
    &\leq
    \qp{
      \sum_K \Norm{ \delta_K^{1/2} \cL(u_h) }_{\leb2(K)}^2
    }^{1/2}
    \qp{
      \sum_K \delta_K^{-1} \Norm{ v_h }_{\leb2(K)}^2
    }^{1/2}.
  \end{split}
\end{equation}
The stabilisation term satisfies
\begin{equation}
  \sum_K \int_K \delta_K \cL(u_h) \cL(v_h)
  \leq
    \qp{
      \sum_K \Norm{ \delta_K^{1/2} \cL(u_h) }_{\leb2(K)}^2
    }^{1/2}
    \qp{
      \sum_K \delta_K^{-1} \Norm{ v_h }_{\leb2(K)}^2
    }^{1/2}.
\end{equation}
For the inflow boundary term, observe that the weight $\vec \omega
\cdot \vec n_{\vec x} - \mathscr S(E) n_E$ is non-negative and
uniformly bounded above on $\Gamma_-$ by
\begin{equation}
  \left| \vec \omega \cdot \vec n_{\vec x} - \mathscr S(E) n_E \right|
  \leq 1 + \Norm{ \mathscr S(E) }_{\leb\infty(\W)}.
\end{equation}
Then, by a trace inverse inequality on shape-regular meshes (e.g.,
\cite{ern2004theory}), we obtain
\begin{equation}
  \int_{\Gamma_-}
  \qp{ \vec \omega \cdot \vec n_{\vec x} - \mathscr S(E) n_E } u_h v_h \, \mathrm{d}s
  \leq
  C_{\operatorname{tr}} \qp{1 + \Norm{ \mathscr S(E) }_{\leb\infty(\W)}}
  \sum_{K \in \mathcal{T}} \delta_K^{-1/2} \Norm{ u_h }_{\leb2(K)} \delta_K^{1/2} \Norm{ v_h }_{\leb2(K)}.
\end{equation}
Applying Cauchy--Schwarz over $K \in \mathcal{T}$, this yields
\begin{equation}
  \int_{\Gamma_-}
  \qp{ \vec \omega \cdot \vec n_{\vec x} - \mathscr S(E) n_E } u_h v_h \, \mathrm{d}s
  \leq
  C_{\operatorname{tr}} \qp{1 + \Norm{ \mathscr S(E) }_{\leb\infty(\W)}}
  \enorm{u_h}_* \enorm{v_h},
\end{equation}
which, upon combining with the previous estimates, completes the
proof.
\end{proof}

\begin{remark}[Continuity constant and $\delta_K$ dependence]
  The continuity constant $C_B$ in Lemma~\ref{lem:dg-coerc} is given
  by
  \begin{equation}
    C_B^2 =
    \epsilon
    +
    \max_{K \in \mathcal{T}} \delta_K
    +
    C_{\operatorname{tr}}^2 \qp{ 1 + \Norm{ \mathscr S }_{\leb\infty(\W_E)} }^2.
  \end{equation}
 
  In the case of the Bragg--Kleeman model \eqref{eq:BraggKleeman}, the
  stopping power satisfies
  \begin{equation}
    \mathscr S(E) = \frac{1}{\alpha p} E^{1 - p}, \quad
    \Norm{ \mathscr S }_{\leb\infty(\W_E)} \sim E_{\min}^{1 - p},   
  \end{equation}
  so $C_B$ may still become large as $E_{\min} \to 0$ when $p > 1$,
  reflecting the degeneracy of the model at low energy. We take
  $E_{\min} > 0$ fixed throughout this work.
\end{remark}

\begin{theorem}[Quasi-optimality]
  \label{thm:cea}
  Let $\psi \in \sobh{1}(\W)$ be the solution to the continuous
  problem \eqref{eq:pde_variational_form}, and let $\psi_h \in \fes$
  solve the discrete problem \eqref{eq:SUPGsoln}. Then
  \begin{equation}
    \enorm{ \psi - \psi_h }
    \leq
    \qp{1 + C_B} \inf_{v_h \in \fes} \enorm{ \psi - v_h }_*
  \end{equation}
  where $C_B > 0$ is the continuity constant from
  Lemma~\ref{lem:dg-coerc}.
\end{theorem}
\begin{proof}
  Let $v_h \in \fes$ be arbitrary and define the error $e_h = \psi_h -
  v_h$. By consistency of the method (Lemma~\ref{lem:consistency}) and
  Galerkin orthogonality,
  \begin{equation}
    \mathcal{A}(\psi_h - v_h, \psi_h - v_h)
    +
    \mathcal{B}_h(\psi_h - v_h, \psi_h - v_h)
    =
    \mathcal{A}(\psi - v_h, \psi_h - v_h)
    +
    \mathcal{B}_h(\psi - v_h, \psi_h - v_h).
  \end{equation}
  Applying the coercivity estimate from Lemma~\ref{lem:dg-coerc},
  \begin{equation}
    \enorm{ \psi_h - v_h }^2
    \leq
    \mathcal{A}( \psi_h - v_h, \psi_h - v_h )
    +
    \mathcal{B}_h( \psi_h - v_h, \psi_h - v_h ).
  \end{equation}
  Applying the continuity bound from Lemma~\ref{lem:dg-coerc},
  \begin{equation}
    \mathcal{A}(\psi - v_h, \psi_h - v_h)
    +
    \mathcal{B}_h(\psi - v_h, \psi_h - v_h)
    \leq
    C_B \enorm{ \psi - v_h }_* \enorm{ \psi_h - v_h }.
  \end{equation}
  Dividing both sides, we obtain
  \begin{equation}
    \enorm{ \psi_h - v_h }
    \leq
    C_B \enorm{ \psi - v_h }_*.
  \end{equation}
  Since this holds for all $v_h \in \fes$, we conclude
  \begin{equation}
    \enorm{ \psi - \psi_h }
    \leq
    \inf_{v_h \in \fes} \enorm{ \psi - v_h } + \enorm{ \psi_h - v_h }
    \leq
    \qp{1 + C_B} \inf_{v_h \in \fes} \enorm{ \psi - v_h }_*.
  \end{equation}
  The result follows after absorbing the prefactor into the constant.
\end{proof}

\begin{corollary}[Best approximation]
  \label{cor:bestApprox}
  Let $\psi \in \sobh{s+1}(\W)$ for some $s \geq 1$, and let $\psi_h
  \in \fes$ solve the discrete problem \eqref{eq:SUPGsoln}. Suppose the stabilisation parameter $\delta_K$ is chosen according to
  \begin{equation}
    \delta_K = \frac{h_K}{2 \qp{\norm{\vec{\omega}} + \norm{\Pi_0 \mathscr{S}(E)}}},
  \end{equation}
  where $\Pi_0$ denotes the $\leb{2}$ projection onto elementwise
  constants. Then the following a priori estimate holds
  \begin{equation}
    \enorm{\psi - \psi_h}
    \leq
    \qp{
      C_1 \epsilon^{1/2}
      h^{\min\qp{p, s}}
      +
      C_2 
      h^{\min\qp{p + 1/2, s}}
    }
    \norm{\psi}_{\sobh{s+1}(\W)},
  \end{equation}
  where $C_1, C_2 > 0$ are constants independent of $h$.
\end{corollary}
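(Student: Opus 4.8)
The plan is to deduce the estimate from the quasi-optimality bound of Theorem~\ref{thm:cea} by inserting a concrete interpolant and invoking standard polynomial approximation theory. Since $\psi\in\sobh{s+1}(\W)$ with $s\ge1$, the Scott--Zhang (or, for $s$ large enough, the Lagrange) interpolant $I_h\psi\in\fes$ is well defined. Taking $v_h=I_h\psi$ in Theorem~\ref{thm:cea} gives $\enorm{\psi-\psi_h}\le(1+C_B)\enorm{\eta}_{*}$ with $\eta:=\psi-I_h\psi$, so the task reduces to bounding each constituent of the augmented norm $\enorm{\cdot}_{*}$ evaluated at $\eta$. It is worth first checking that the continuity constant $C_B$ of Lemma~\ref{lem:dg-coerc} stays bounded as $h\to0$: since $C_B^2=\epsilon+\max_K\delta_K+C_{\mathrm{tr}}^2\qp{1+\Norm{\mathscr S}_{\leb\infty(\W)}}^2$ and the prescribed $\delta_K\sim h_K\to0$, the prefactor $1+C_B$ is uniformly bounded, and hence absorbed into $C_1,C_2$.

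Next I would estimate the contributions to $\enorm{\eta}_{*}^2$ one by one, using the local interpolation bounds $\Norm{\eta}_{\leb2(K)}\le C h_K^{t+1}\norm{\psi}_{\sobh{t+1}(K)}$ and $|\eta|_{\sobh1(K)}\le C h_K^{t}\norm{\psi}_{\sobh{t+1}(K)}$, with $t:=\min(k,s)$ and $k$ the polynomial degree of $\fes$. For the diffusion term, $\Norm{\nabla_{\vec\omega}\eta}\le\Norm{\nabla_{\vec x}\eta}$ because $\mathcal P$ is an orthogonal projection, so $\epsilon\Norm{\nabla_{\vec\omega}\eta}^2$ contributes at order $\epsilon^{1/2}h^{t}$. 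The mass term $\mu\Norm{\eta}^2$ contributes at the higher order $h^{t+1}$. For the stabilisation term, $\cL(\eta)=\vec\omega\cdot\nabla_{\vec x}\eta-\nabla_E(\mathscr S(E)\eta)$ is first order, so $\Norm{\cL(\eta)}_{\leb2(K)}\le C\qp{|\eta|_{\sobh1(K)}+\Norm{\eta}_{\leb2(K)}}$, and with $\delta_K\sim h_K$ the weighted quantity $\Norm{\delta_K^{1/2}\cL(\eta)}_{\leb2(K)}$ is of order $h^{t+1/2}$. The outflow boundary term is handled with the scaled trace inequality $\Norm{\eta}_{\leb2(\partial K)}^2\le C\qp{h_K^{-1}\Norm{\eta}_{\leb2(K)}^2+h_K|\eta|_{\sobh1(K)}^2}$, which, together with the uniform bound on the weight, again gives order $h^{t+1/2}$. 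Finally the extra term $\sum_K\delta_K^{-1}\Norm{\eta}_{\leb2(K)}^2$ of the $\enorm{\cdot}_{*}$ norm scales like $h^{-1}\cdot h^{2t+2}=h^{2t+1}$, i.e.\ order $h^{t+1/2}$. Collecting these, $\enorm{\eta}_{*}\le C\qp{\epsilon^{1/2}h^{t}+h^{t+1/2}}\norm{\psi}_{\sobh{s+1}(\W)}$, so that the diffusion term furnishes the $\epsilon^{1/2}h^{\min(k,s)}$ contribution and the transport, stabilisation and trace terms the half-order-higher $h^{\min(k,s)+1/2}$ contribution, matching the stated rates.

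The main obstacle is not any single estimate but the bookkeeping that ensures the two \emph{dangerous} ingredients of the $\enorm{\cdot}_{*}$ norm, namely the weighted mass term $\sum_K\delta_K^{-1}\Norm{\eta}_{\leb2(K)}^2$ and the outflow trace term, do not degrade the order. Both carry a factor $\delta_K^{-1}\sim h^{-1}$ (explicitly, or through the trace inequality), which would naively produce order $h^{t-1/2}$; the resolution is that the $\leb2$ interpolation error is one full power of $h$ higher than the $\sobh1$ error, so these terms in fact scale like $h^{t+1/2}$. This is precisely why the $\enorm{\cdot}_{*}$ norm is the correct object to appear on the right of Theorem~\ref{thm:cea}, and why the balanced choice $\delta_K\sim h_K$ is essential: a larger $\delta_K$ would spoil the stabilisation order, while a smaller one would blow up the $\delta_K^{-1}$ term. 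A secondary point requiring care is that the anisotropic angular gradient $\nabla_{\vec\omega}=\mathcal P\nabla_{\vec x}$ must be dominated by the full gradient, which is immediate since $\mathcal P$ has operator norm one, and that the interpolation and trace constants are uniform over the shape-regular family $\mathcal T$.
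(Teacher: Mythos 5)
Your proposal is correct and follows essentially the same route as the paper's proof: apply Theorem~\ref{thm:cea} with $v_h = I_h\psi$ and bound each constituent of $\enorm{\cdot}_*$ via local interpolation, inverse-trace and scaling estimates, with the balanced choice $\delta_K \sim h_K$ supplying the half-order gain in the stabilisation, trace and $\delta_K^{-1}$-weighted terms. Your bookkeeping is if anything slightly more careful than the paper's (explicit uniform bound on $C_B$, the Scott--Zhang fallback when the Lagrange interpolant is not defined for $\psi \in \sobh{2}$ in four dimensions, and the exponent $\min(k,s)+1/2$, which dominates the stated $\min(k+1/2,s)$ and hence implies the claim).
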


\begin{proof}
  Let $I_h \psi \in \fes$ denote the Lagrange interpolant of
  $\psi$. Applying Theorem~\ref{thm:cea} yields
  \begin{equation}
    \enorm{ \psi - \psi_h } \leq \qp{1+C_B} \enorm{ \psi - I_h \psi }_*.
  \end{equation}
  To estimate each component of the error, we apply standard
  interpolation estimates for $I_h \psi$ assuming $\psi \in
  \sobh{s+1}(\W)$. First, for the angular diffusion term
  \begin{equation}
    \Norm{ \nabla_{\vec \omega} (\psi - I_h \psi) }_{\leb2(\W)}
    \leq C h^{\min(k, s)} \Norm{ \psi }_{\sobh{s+1}(\W)}.
  \end{equation}
  Next, for the stabilisation term involving the transport operator  
  \begin{equation}
    \qp{
      \sum_{K\in\mathcal T}\Norm{ \delta_K^{1/2} \cL(\psi - I_h \psi) }_{\leb2(K)}^2
      }^{1/2}
    \leq
    C h^{\min(k + 1/2, s)} \Norm{ \psi }_{\sobh{s+1}(\W)}.
  \end{equation} 
  This rate reflects the half-order gain due to the use of local
  $\delta_K \sim h_K$ stabilisation.

  For the boundary term we apply a trace inequality
  \begin{equation}
    \Norm{ \psi - I_h \psi }_{\leb2(\Gamma^+)}^2
    \leq
    C h^{2 \min(k + 1/2, s)} \Norm{ \psi }_{\sobh{s+1}(\W)}^2.
  \end{equation}
  Finally, for the dual norm contribution
  \begin{equation}
    \sum_{K \in \mathcal{T}} \delta_K^{-1} \Norm{ \psi - I_h \psi }_{\leb2(K)}^2
    \leq
    C\sum_K h_K^{-1} h_K^{2 \min(k+1, s+1)} \Norm{ \psi }_{\sobh{s+1}(K)}^2
    \leq
    Ch^{2 \min(k + 1/2, s)} \Norm{ \psi }_{\sobh{s+1}(\W)}^2.
  \end{equation}
  Collecting all terms, we find
  \begin{equation}
    \enorm{ \psi - \psi_h }
    \leq
    C\epsilon^{1/2} h^{\min(k, s)} \norm{ \psi }_{\sobh{s+1}(\W)}
    +
    C h^{\min(k + 1/2, s)} \norm{ \psi }_{\sobh{s+1}(\W)},
  \end{equation}
  as claimed.
\end{proof}

\begin{remark}[Error rates and the role of $\epsilon$]  
  Notice, if $\epsilon = 0$, corresponding to the absence of angular
  diffusion, the bilinear form remains coercive with respect to
  $\enorm{\cdot}$. However, the problem becomes purely hyperbolic and
  a stronger result is obtained, that is, the error rate improves to
  $\Oh(h^{p+1/2})$.
\end{remark}

\section{Computation of Absorbed Dose}
\label{sec:dose}

The absorbed dose $\cD(\vec x)$ represents the energy deposited per unit
mass at a given spatial position $\vec{x} \in \W_{\vec{x}}$. It is
computed as
\begin{equation}\label{eq:absorbed_dose}
  \cD(\vec x)
  =
  \int_{E_{\text{min}}}^{E_{\text{max}}} \frac{S(E)}{\rho(\vec{x})} \psi(\vec{x}, E) \d E,
\end{equation}
where $\rho(\vec{x})$ is the mass density of the medium at position
$\vec{x}$. 

The absorbed dose is an important output in proton beam therapy. The
nature of proton interactions with matter leads to the Bragg peak,
where protons deposit the majority of their energy near the end of
their range. Accurate computation of dose ensures that the tumour
receives the prescribed energy deposition while minimising exposure to
healthy tissues.

To approximate the absorbed dose numerically, we consider the discrete
fluence $\psi_h$ and define the approximate dose $\cD_h$ via quadrature
in energy. Several approaches are possible depending on how the result
is projected into a spatial finite element space. Let $\{\xi_q\}_{q=1}^{Q_E}$
and $\{w_q\}_{q=1}^{Q_E}$ be quadrature nodes and weights in energy.

\subsection{Galerkin projection}

A natural approach is to define $\cD_h$ by spatial projection of the
integrand in \eqref{eq:absorbed_dose} onto the finite element space
$\fes_{\vec{x}}$. Given the discrete fluence $\psi_h$, we define $\cD_h
\in \fes_{\vec{x}}$ by the variational formulation
\begin{equation}\label{eq:Dh_projection}
  \int_{\W_{\vec{x}}} \cD_h(\vec x) v_h(\vec x) \, \d \vec{x}
  =
  \int_{\W_{\vec{x}}}
  \sum_{q=1}^{Q_E} w_q \frac{S(\xi_q)}{\rho(\vec{x})} \psi_h(\vec{x}, \xi_q)
  v_h(\vec x)
  \d \vec{x}
  \Foreach v_h \in \fes_{\vec{x}}.
\end{equation}
That is $\cD_h$ is the $\leb{2}$-projection of the energy-integrated
quantity onto the spatial finite element space $\fes_{\vec{x}}$.

\begin{lemma}[Error estimate for Galerkin dose projection]
  \label{lem:galerkin}
  Let $\psi \in \sobh{s+1}(\W)$ for some $s \geq 1$, and let $\psi_h
  \in \fes$ be the finite element approximation. Let $\cD$ denote the
  exact dose defined by \eqref{eq:absorbed_dose}, and let $\cD_h \in
  \fes_{\vec x}$ be the Galerkin projection defined in
  \eqref{eq:Dh_projection} using a quadrature rule of order $q_E$ in
  energy with step size $h_E$. Then
  \begin{equation}
    \Norm{ \cD - \cD_h }_{\leb2(\W_{\vec x})}
    \leq
    C_1 h_E^{\min(q_E, s)} \Norm{ \psi }_{\sobh{s+1}(\W) }
    +
    C_2 h^{\min(k, s)} \Norm{ \psi }_{\sobh{s+1}(\W) }.
  \end{equation}  
\end{lemma}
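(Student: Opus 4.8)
The plan is to bound $\Norm{\cD - \cD_h}_{\leb2(\W_{\vec x})}$ by inserting intermediate quantities that isolate, in turn, the spatial projection error, the energy-quadrature error, and the fluence discretisation error. To this end I introduce the $\leb2(\W_{\vec x})$-orthogonal projection $\Pi_{\vec x}\colon \leb2(\W_{\vec x}) \to \fes_{\vec x}$ and the two ``exact-space'' doses
\begin{equation}
  \tilde\cD(\vec x) := \sum_{q=1}^{Q_E} w_q \frac{S(\xi_q)}{\rho(\vec x)}\psi(\vec x, \xi_q),
  \qquad
  \hat\cD(\vec x) := \sum_{q=1}^{Q_E} w_q \frac{S(\xi_q)}{\rho(\vec x)}\psi_h(\vec x, \xi_q),
\end{equation}
so that $\tilde\cD$ applies the energy quadrature to the exact fluence, $\hat\cD$ replaces $\psi$ by $\psi_h$, and by \eqref{eq:Dh_projection} one has $\cD_h = \Pi_{\vec x}\hat\cD$. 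Since $\Pi_{\vec x}$ is a contraction in $\leb2(\W_{\vec x})$, the triangle inequality gives
\begin{equation}
  \Norm{\cD - \cD_h}_{\leb2(\W_{\vec x})}
  \leq
  \Norm{\cD - \Pi_{\vec x}\cD}_{\leb2(\W_{\vec x})}
  +
  \Norm{\cD - \tilde\cD}_{\leb2(\W_{\vec x})}
  +
  \Norm{\tilde\cD - \hat\cD}_{\leb2(\W_{\vec x})},
\end{equation}
and I would treat the three contributions separately.

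For the first term, integrating out energy preserves spatial regularity, so that $\Norm{\cD}_{\sobh{s+1}(\W_{\vec x})} \leq C\Norm{\psi}_{\sobh{s+1}(\W)}$ using boundedness and smoothness of $S$ and $1/\rho$; a standard $\leb2$-projection estimate then yields $\Norm{\cD - \Pi_{\vec x}\cD}_{\leb2(\W_{\vec x})} \leq C h^{\min(k+1,s+1)}\Norm{\psi}_{\sobh{s+1}(\W)}$, which is one order higher than the claimed rate and is absorbed into the $C_2$ term. For the second term I fix $\vec x$ and read $\cD(\vec x) - \tilde\cD(\vec x)$ as the error of the order-$q_E$ composite rule applied to $E \mapsto S(E)\psi(\vec x, E)/\rho(\vec x)$; the usual composite-quadrature estimate bounds this pointwise by $C h_E^{\min(q_E, s)}$ times an energy Sobolev seminorm of the integrand, and squaring and integrating over $\W_{\vec x}$ (again using boundedness of $S/\rho$) produces $\Norm{\cD - \tilde\cD}_{\leb2(\W_{\vec x})} \leq C_1 h_E^{\min(q_E, s)}\Norm{\psi}_{\sobh{s+1}(\W)}$, the first claimed term.

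The remaining term carries the fluence error. Cauchy--Schwarz in the quadrature sum, with $\sum_q w_q = E_{\max}-E_{\min}$ and the bounds on $S/\rho$, gives
\begin{equation}
  \Norm{\tilde\cD - \hat\cD}_{\leb2(\W_{\vec x})}^2
  \leq
  C \sum_{q=1}^{Q_E} w_q \Norm{(\psi - \psi_h)(\cdot, \xi_q)}_{\leb2(\W_{\vec x})}^2,
\end{equation}
so it remains to control each spatial slice of the error at the fixed energies $\xi_q$. This is the step I expect to be the main obstacle, because $\enorm{\cdot}$, and hence the a priori bound of Corollary~\ref{cor:bestApprox}, controls $\psi - \psi_h$ in a full-domain $\leb2(\W)$ and projected-gradient sense, not on codimension-one energy slices. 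I would bridge this gap with a trace inequality in the energy direction, $\Norm{w(\cdot, \xi)}_{\leb2(\W_{\vec x})}^2 \leq C\qp{\Norm{w}_{\leb2(\W)}^2 + \Norm{w}_{\leb2(\W)}\Norm{\nabla_E w}_{\leb2(\W)}}$ applied to $w = \psi - \psi_h$, the energy derivative of the discrete part being absorbed through the stabilisation and inverse structure already encoded in $\enorm{\cdot}$. Summing against the positive weights and invoking $\Norm{\psi - \psi_h}_{\leb2(\W)} \leq \mu^{-1/2}\enorm{\psi - \psi_h} \leq C h^{\min(k,s)}\Norm{\psi}_{\sobh{s+1}(\W)}$ from Corollary~\ref{cor:bestApprox} yields the second claimed term $C_2 h^{\min(k,s)}\Norm{\psi}_{\sobh{s+1}(\W)}$. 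Collecting the three contributions and absorbing the higher-order projection term completes the estimate.
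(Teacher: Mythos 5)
Your decomposition is, at bottom, the same as the paper's: your $\tilde\cD$ is precisely the auxiliary quadrature-integrated dose the paper calls $\cD^*$, and your quadrature and spatial-projection estimates match the printed ones. In fact the paper lumps your first and third contributions together as ``the projection error in space'' and simply asserts the $h^{\min(k,s)}$ bound, so you have correctly isolated the one step that genuinely needs an argument --- transferring the fluence error to the energy slices $E = \xi_q$. The gap is in how you close that step. The claim that $\Norm{\nabla_E(\psi - \psi_h)}_{\leb2(\W)}$ can be ``absorbed through the stabilisation \ldots encoded in $\enorm{\cdot}$'' does not hold: the stabilisation term controls $\delta_K^{1/2}\cL(\cdot)$ with $\delta_K \sim h_K$, i.e.\ only the specific combination $\vec\omega\cdot\nabla_{\vec x}w - \nabla_E(\mathscr S(E) w)$ at strength $h^{1/2}$, and $\enorm{\cdot}$ contains no independent control of $\vec\omega\cdot\nabla_{\vec x}w$ with which to separate out $\nabla_E w$. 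The honest substitute is to split $w = (\psi - I_h\psi) + (I_h\psi - \psi_h)$ and apply an inverse estimate $\Norm{\nabla_E(I_h\psi - \psi_h)}_{\leb2(\W)} \leq C h^{-1}\Norm{I_h\psi - \psi_h}_{\leb2(\W)}$ to the discrete part, but this feeds your multiplicative trace inequality a factor $h^{-1}$: with the rate $\Norm{w}_{\leb2(\W)} \leq C(\epsilon^{1/2}h^{\min(k,s)} + h^{\min(k+1/2,s)})\Norm{\psi}_{\sobh{s+1}(\W)}$ from Corollary~\ref{cor:bestApprox}, the slice bound comes out as $O(\epsilon^{1/2} h^{\min(k,s)-1/2} + h^{\min(k,\,s-1/2)})$. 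For fixed $\epsilon > 0$ this is half an order short of the claimed $C_2 h^{\min(k,s)}$; your route recovers the full rate only in the regime $\epsilon \lesssim h$, $s \geq k + \tfrac12$, whereas the lemma is stated unconditionally.

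The loss comes from estimating each slice separately; it disappears if you instead treat the weighted sum of slice norms as a quadrature approximation of the full-domain norm. Since the weights are positive and the rule has order $\geq 1$,
\begin{equation}
  \sum_{q=1}^{Q_E} w_q \Norm{w(\cdot,\xi_q)}_{\leb2(\W_{\vec x})}^2
  \leq
  \int_{\W_E} \Norm{w(\cdot,E)}_{\leb2(\W_{\vec x})}^2 \d E
  +
  C\, h_E \Norm{w}_{\leb2(\W)} \Norm{\nabla_E w}_{\leb2(\W)},
\end{equation}
so the inverse-estimate factor $h^{-1}$ is now multiplied by $h_E$ rather than by $1$. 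Under the natural compatibility assumption $h_E \leq C h$ the correction term is $O\big(\Norm{w}_{\leb2(\W)}^2\big)$ and the full-domain rate transfers intact, giving $\Norm{\tilde\cD - \hat\cD}_{\leb2(\W_{\vec x})} \leq C h^{\min(k,s)}\Norm{\psi}_{\sobh{s+1}(\W)}$ as required, with the $\epsilon^{1/2}$ factor absorbed into $C_2$. With that repair (and the mild assumption $h_E \leq Ch$ made explicit) your argument is complete --- and, for what it is worth, more complete than the printed proof, which never confronts the fluence-error contribution at all.
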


\begin{proof}
  Define the auxiliary quadrature-integrated exact dose as
  \begin{equation}
    \cD^*(\vec x)
    :=
    \sum_{q=1}^{Q_E} w_q \frac{S(\xi_q)}{\rho(\vec x)} \psi(\vec x, \xi_q).
  \end{equation}
  Now decompose the error as
  \begin{equation}
    \Norm{ \cD - \cD_h }_{\leb2(\W_{\vec x})}
    \leq
    \Norm{ \cD - \cD^* }_{\leb2(\W_{\vec x})}
    +
    \Norm{ \cD^* - \cD_h }_{\leb2(\W_{\vec x})}.
  \end{equation}
  The first term is the quadrature error. Since $\psi \in
  \sobh{s+1}(\W)$, the map $E \mapsto \psi(\vec x, E)$ is smooth for
  each fixed $\vec x$ and standard quadrature theory yields
  \begin{equation}
    \Norm{ \cD - \cD^* }_{\leb2(\W_{\vec x})}
    \leq
    C_1 h_E^{\min(q_E, s)} \Norm{ \psi }_{\sobh{s+1}(\W) }.
  \end{equation}
  The second term is the projection error in space. Since $\psi$ is
  smooth $\cD^*$ is smooth in $\vec x$ and 
  \begin{equation}
    \Norm{ \cD^* - \cD_h }_{\leb2(\W_{\vec x})}
    \leq
    C h^{\min(k, s)} \Norm{ \psi }_{\sobh{s+1}(\W) }.
  \end{equation}
  Combining both gives the stated bound.
\end{proof}

\begin{remark}[Failure of positivity]
  Although the Galerkin projection is variationally consistent and
  convergent, it does not preserve nonnegativity. Even if $\psi_h(\vec
  x, E) \geq 0$ for all $(\vec x, E) \in \W$, and $S(E), \rho(\vec x)
  > 0$, the projection onto $\fes_{\vec x}$ may produce $\cD_h(\vec
  x_i) < 0$ at some nodes due to sign changes in the basis functions.
  
  In particular, even when $\psi_h$ is computed using a nodally bound
  preserving method, the dose $\cD_h$ does not inherit this property
  unless additional constraints are enforced. This limitation makes
  the Galerkin approach unsuitable in contexts where pointwise
  nonnegativity of the dose is required, for example, as part of a
  constraint in inverse problems or optimisation routines.
\end{remark}

\subsection{Elementwise constant projection}

A robust and positivity-preserving alternative is to compute $\cD_h$ as
a piecewise constant function. For each element $K \in
\mathcal{T}_{\vec{x}}$, define
\begin{equation}
  \cD_h(\vec x)\vert_K
  :=
  \frac{1}{|K|}
  \int_K
  \sum_{q=1}^{Q_E} w_q \frac{S(\xi_q)}{\rho(\vec{x})} \psi_h(\vec{x}, \xi_q)
  \, \d \vec{x}.
\end{equation}

\begin{lemma}[Error estimate for elementwise constant projection]
  Let $\psi \in \sobh{2}(\W)$ and let $\cD$ be the exact dose
  \eqref{eq:absorbed_dose}. Let $\cD_h$ be the elementwise constant
  approximation defined above, using a quadrature rule of order $q_E
  \geq 1$ in energy. Then
  \begin{equation}
    \Norm{ \cD - \cD_h }_{\leb2(\W_{\vec x})}
    \leq
    C h \Norm{ \psi }_{\sobh{2}(\W)},
  \end{equation}
  where $C$ depends on bounds for $S$ and $\rho$ but is independent of
  $h$.
\end{lemma}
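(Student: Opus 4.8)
The plan is to mirror the two-stage splitting used in the proof of Lemma~\ref{lem:galerkin}, isolating a quadrature contribution and a spatial projection contribution, but to replace the $\leb2$-projection estimate by the first-order estimate available for elementwise constants. As before, introduce the quadrature-integrated exact dose
\begin{equation*}
  \cD^*(\vec x)
  :=
  \sum_{q=1}^{Q_E} w_q \frac{S(\xi_q)}{\rho(\vec x)} \psi(\vec x, \xi_q),
\end{equation*}
and split by the triangle inequality
\begin{equation*}
  \Norm{\cD - \cD_h}_{\leb2(\W_{\vec x})}
  \leq
  \Norm{\cD - \cD^*}_{\leb2(\W_{\vec x})}
  +
  \Norm{\cD^* - \cD_h}_{\leb2(\W_{\vec x})},
\end{equation*}
where $\cD_h = \Pi_0 \cD^*$ is the elementwise average, i.e.\ the $\leb2$ projection onto piecewise constants. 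Crucially, and in line with the preceding lemma, no finite element error $\psi - \psi_h$ is folded in: the estimate quantifies only the quadrature and piecewise-constant projection errors.

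First I would handle the quadrature term. For fixed $\vec x$ the integrand $E \mapsto S(E)\psi(\vec x, E)/\rho(\vec x)$ inherits energy regularity from $\psi \in \sobh2(\W)$, and $S$, $1/\rho$ are bounded under the stated hypotheses. A rule of order $q_E \geq 1$ with energy step $h_E$ then gives, after integrating in $\vec x$, $\Norm{\cD - \cD^*}_{\leb2(\W_{\vec x})} \leq C h_E \Norm{\psi}_{\sobh2(\W)}$, consistent with the $\min(q_E,s)=1$ rate of Lemma~\ref{lem:galerkin} at $s=1$. Taking the energy step comparable to the spatial mesh, $h_E \leq h$, absorbs this contribution into the claimed $\Oh(h)$ bound.

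The projection term is where the argument diverges from the Galerkin case and is the crux of the estimate. The key step is to establish $\cD^* \in \sobh1(\W_{\vec x})$ with $\Norm{\cD^*}_{\sobh1(\W_{\vec x})} \leq C\Norm{\psi}_{\sobh2(\W)}$. Differentiating in space, $\nabla_{\vec x}\cD^*$ involves $\nabla_{\vec x}(1/\rho)$ multiplying $\psi(\cdot,\xi_q)$ and $(1/\rho)$ multiplying $\nabla_{\vec x}\psi(\cdot,\xi_q)$; the assumed bounds on $S$ and $\rho$ (with $\rho$ bounded away from zero) reduce matters to controlling the energy slices $\psi(\cdot,\xi_q)$ in $\sobh1(\W_{\vec x})$. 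This is precisely a trace/restriction statement: the restriction of an $\sobh2$ function on the product domain $\W_{\vec x}\times\W_E$ to a fixed energy $E=\xi_q$ lies in $\sobh1(\W_{\vec x})$, with norm bounded by $C\Norm{\psi}_{\sobh2(\W)}$ \emph{uniformly} in $\xi_q$. Combined with the fact that the quadrature weights sum to $|\W_E|$, this yields the desired $\sobh1$ control of $\cD^*$.

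Finally, with $\cD_h = \Pi_0\cD^*$, the standard first-order approximation estimate applies elementwise: by the Poincar\'e/Bramble--Hilbert inequality on each shape-regular element, $\Norm{\cD^* - \cD_h}_{\leb2(K)} \leq C h_K \Norm{\nabla_{\vec x}\cD^*}_{\leb2(K)}$, and squaring and summing over $K \in \mathcal{T}_{\vec x}$ gives $\Norm{\cD^* - \cD_h}_{\leb2(\W_{\vec x})} \leq C h \Norm{\psi}_{\sobh2(\W)}$; adding the quadrature term completes the proof. The main obstacle is the uniform-in-$\xi_q$ trace bound on the energy slices. I would also emphasise that, in contrast to Lemma~\ref{lem:galerkin}, the rate is capped at $\Oh(h)$ because elementwise constants achieve only first-order $\leb2$ accuracy irrespective of the polynomial degree $k$ or any additional smoothness of $\psi$, which is exactly why the weaker hypothesis $\psi \in \sobh2(\W)$ suffices here.
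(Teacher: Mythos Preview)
Your proposal is correct and follows essentially the same approach as the paper, whose proof consists of the single sentence ``The proof follows the same lines as Lemma~\ref{lem:galerkin} noting that the piecewise constant structure limits accuracy.'' You have supplied the details the paper omits---the uniform-in-$\xi_q$ trace bound for energy slices and the elementwise Poincar\'e/Bramble--Hilbert argument for $\Pi_0$---and your observation that the rate is capped at $\Oh(h)$ irrespective of $k$ is exactly the point the paper is gesturing at.
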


\begin{proof}
  The proof follows the same lines as Lemma \ref{lem:galerkin} noting
  that the piecewise constant structure limits accuracy.
\end{proof}

\begin{remark}[Positivity and stability]
  The elementwise constant projection preserves positivity at the
  level of elements. If $\psi_h \geq 0$, given $S, \rho > 0$, then
  $\cD_h \geq 0$ on each $K \in \mathcal{T}_{\vec x}$. This follows
  directly from the positivity of the integrand and weights in the
  quadrature rule. In addition, the projection is stable in $\leb2$.

  While it lacks higher-order accuracy in space, it is robust and easy
  to implement and maybe sufficient for many visualisation and dose
  aggregation tasks where smoothness is not critical.
\end{remark}

\subsection{Variational inequality projection}

To remain consistent with the nodal positivity framework, a natural
approach amenable to higher order approximations is to define the
discrete dose as the solution of a constrained variational inequality
that preserves nonnegativity at nodal degrees of freedom. Let
\begin{equation}
  K_{\vec{x}} :=
  \ensemble{ v_h \in \fes_{\vec{x}} }{ v_h(\vec x_i) \geq 0 \text{ for all nodes } \vec x_i }.
\end{equation}
Then the variational inequality formulation reads: Find $\cD_h \in
K_{\vec{x}}$ such that
\begin{equation}
  \int_{\W_{\vec{x}}} \cD_h(\vec x) (v_h(\vec x) - \cD_h(\vec x)) \d \vec x
  \geq
  \int_{\W_{\vec{x}}}
  \sum_{q=1}^{Q_E} w_q \frac{S(\xi_q)}{\rho(\vec{x})} \psi_h(\vec{x}, \xi_q)
  (v_h(\vec x) - \cD_h(\vec x)) \d \vec{x}
  \Foreach v_h \in K_{\vec{x}}.
\end{equation}
This formulation ensures that the discrete dose satisfies the same
nodal bound-preserving structure as the underlying fluence
$\psi_h$. In particular, assuming the quadrature is chosen to have
positive weights, the resulting dose $\cD_h$ satisfies $\cD_h(\vec
x_i) \geq 0$ at all nodes, regardless of the sign of the basis
functions.

\section{Numerical Experiments}
\label{sec:numerics}

In this section, we assess the accuracy, stability and computational
efficiency of the proposed scheme across different proton transport
scenarios. The numerical experiments begin with a benchmark comparison
against an analytical solution for a pristine Bragg peak, providing a
reference for the deterministic model. We then investigate the role of
mesh adaptivity in improving solution accuracy and controlling
numerical artefacts, particularly in regions with steep gradients. The
integration of Coulomb scattering is examined to quantify its impact
on fluence distributions. Beyond accuracy considerations, we analyse
the computational complexity of the scheme, assessing the efficiency
of the method. Finally, we extend to more physically realistic setups,
demonstrating its robustness in scenarios relevant to proton therapy
applications. The numerical results in this work were generated using
Firedrake \cite{FiredrakeUserManual}. We utilise PETSc
\cite{petsc-efficient} to solve the variational inequalities using a
Reduced-Space Active Set Method, or LU factorisation for the standard
variational formulations. When needed, we use Netgen meshes in
Firedrake to enable adaptive mesh refinements \cite{Betteridge2024}.

\subsection{Example 1: Benchmark - Pristine Bragg peak with $\epsilon = 0$}

To assess the accuracy of the deterministic model, we consider a
pristine Bragg peak simulation as a computational benchmark. A
monoenergetic proton beam with an initial energy of $E_0 = 62 \,
\text{MeV}$ and a fluence of $1.21 \, \text{gigaprotons/cm}^2$ is used
as the input beam. The analytical model from \cite{ashby2025efficient}
employs standard Bragg-Kleeman parameter values for water ($ \alpha =
2.2 \times 10^{-3}$, $p = 1.77$) as reported in
\cite{bortfeld1997analytical}. To account for energy spread in the the
standard deviation of the proton energy spectrum is set to $\delta
E_0$, with $\delta = 0.01$.

The boundary condition for the analytical model is defined as
\begin{equation}\label{eq:gigaproton_spectrum}
  \psi(\vec 0, E)
  =
  1.21 \times 10^9 \times C \exp\left(-\frac{(E - E_0)^2}{2 \delta^2 E_0^2}\right),
\end{equation}
where $C$ is a normalisation constant ensuring that the integral of
the spectrum matches the total fluence of $1.21 \,
\text{gigaprotons/cm}^2$. The exact solution for the fluence as a
function of depth and energy for $\epsilon=0$, is given by
\begin{equation}\label{eq:defn_fluence}
  \psi^{exact}(\vec{x}, E) =
  \left(E^p + \frac{\vec{\omega} \cdot \vec{x}}{\alpha}\right)^{\frac{1-p}{p}}
  g\left(\left(E^p + \frac{\vec{\omega} \cdot \vec{x}}{\alpha}\right)^{\frac{1}{p}}\right)
  E^{p-1},
\end{equation}
where $g(E)$ is defined by the boundary condition in
\eqref{eq:gigaproton_spectrum} as
\begin{equation}
  g(E) = 1.21 \times 10^9 \times C \exp\left(-\frac{(E - E_0)^2}{2 \epsilon^2 E_0^2}\right).
\end{equation}
We compare the numerical approximations obtained using the vanilla
SUPG method (\ref{eq:SUPGsoln}) and the positivity-preserving scheme
(\ref{eq:discrete_inequality}). Figure \ref{fig:ex1supg} shows the
results for the SUPG method, while Figure \ref{fig:ex1vi} presents the
corresponding results for the positivity-preserving formulation. Each
plot includes the inflow energy boundary conditions, the space-energy
fluence and the absorbed dose.

The SUPG method exhibits spurious oscillations in the fluence, which
propagate throughout the domain due to its lack of strict positivity
preservation. These artefacts are particularly pronounced in regions
with steep energy gradients, where numerical dispersion causes
unphysical undershoots and overshoots in the cross-wind direction. In
contrast, the positivity-preserving scheme eliminates these
oscillations, providing a sharp and physically consistent resolution
of the fluence distribution. The improved stability of the scheme
ensures that the dose computation remains well-behaved, preventing
nonphysical negative values in low-dose regions.

\begin{figure}[h!]
    \centering
    \includegraphics[width=0.6\linewidth]{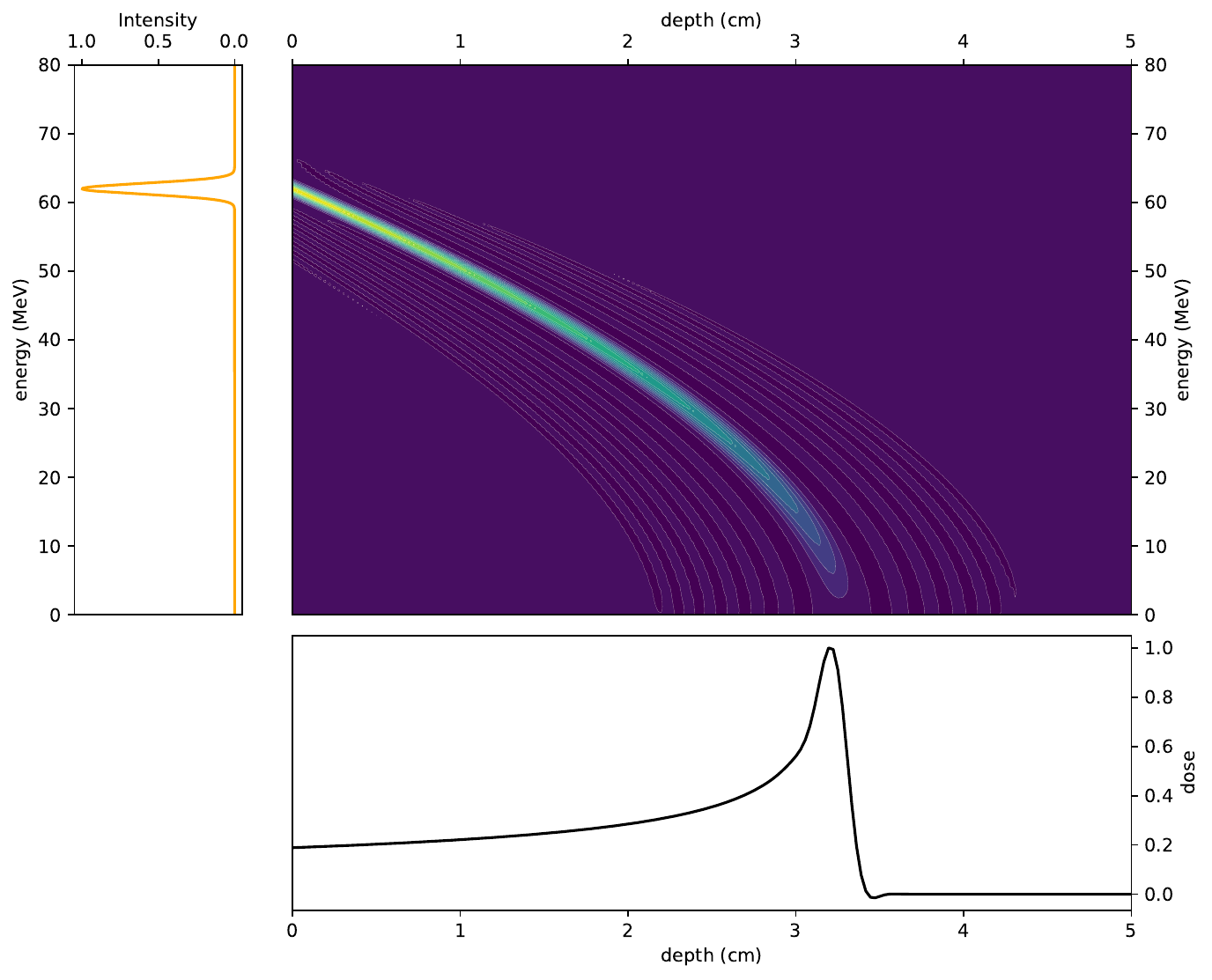}
    \caption{ \label{fig:ex1supg} Numerical solution obtained using
      the SUPG method (\ref{eq:SUPGsoln}). The plots display the
      inflow energy boundary conditions, the space-energy fluence and
      the absorbed dose. Spurious oscillations appear in the fluence,
      propagating throughout the domain due to the lack of strict
      positivity preservation. Notice this results in a slight
      negative dose after the Bragg peak.}
\end{figure}

\begin{figure}[h!]
    \centering
    \includegraphics[width=0.6\linewidth]{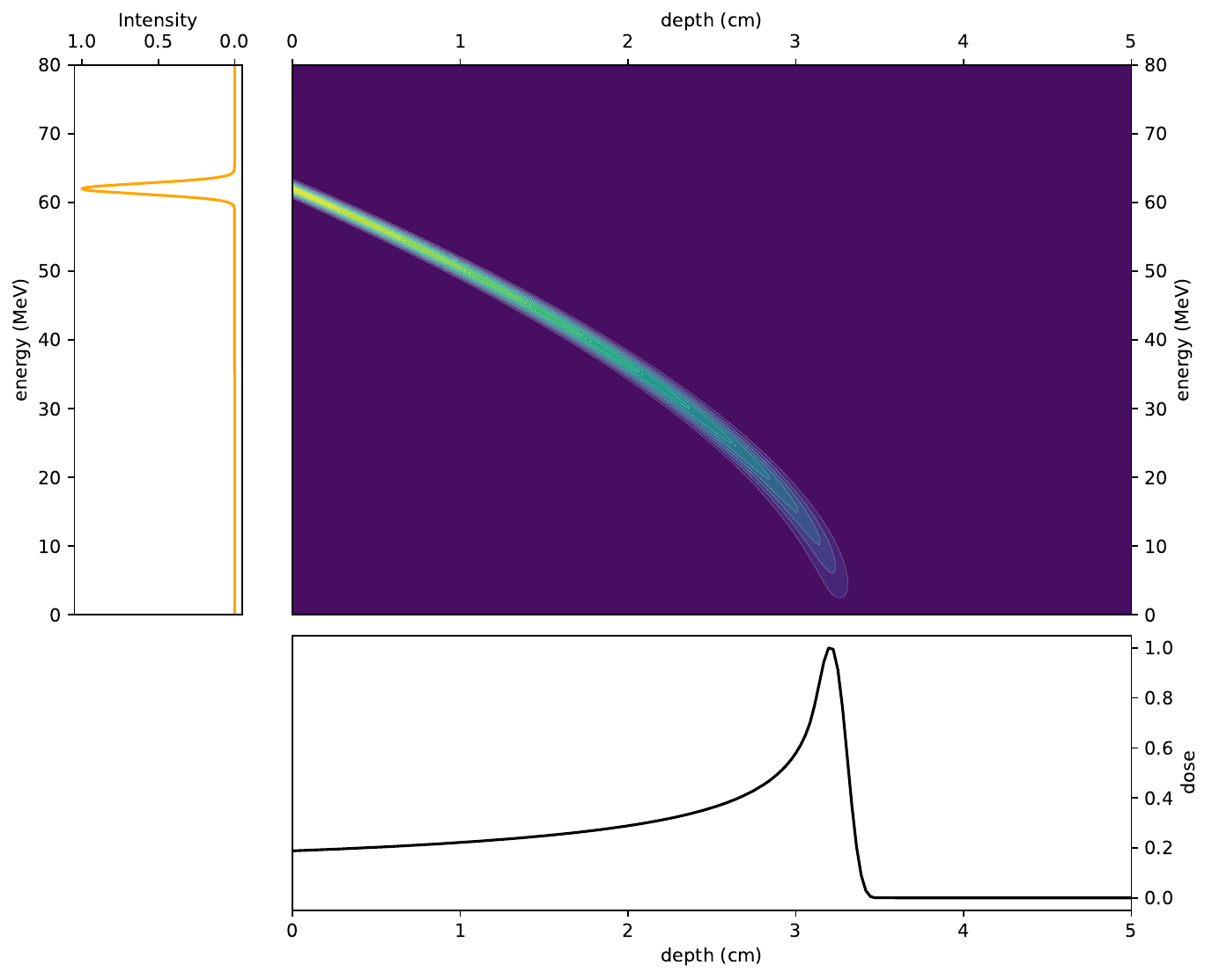}
    \caption{ \label{fig:ex1vi} Numerical solution obtained using the
      positivity-preserving scheme (\ref{eq:discrete_inequality}). The
      plots display the inflow energy boundary conditions, the
      space-energy fluence, and the absorbed dose. Unlike the SUPG
      method, this scheme eliminates oscillations and provides a
      sharper, more physically consistent resolution of the fluence
      distribution.}
\end{figure}

\clearpage
\subsection{Example 2: Adaptive mesh refinement}

For a 'monoenergetic' \footnote{ In practice, the beam has a narrow
but smooth energy spectrum (e.g., a Gaussian centred at $E_0$). From a
mathematical perspective, a truly monoenergetic beam would require a
Dirac delta in energy, which is not square-integrable and leads to
non-classical inflow boundary data.  } beam, the behaviour of the
fluence is highly localised, motivating the construction of an a
posteriori indicator to drive mesh adaptivity. To that end, consider a
sequence of nested finite element spaces
$\qp{\fes_i}_{i\in\naturals_0}$ with $\fes_0 \subset \fes_1 \subset
\cdots$. Let $\fes_0$ denote a prescribed coarse mesh, then, over each
space, consider the solution $\psi_h^i \in \fes_i$ satisfying
(\ref{eq:discrete_inequality}).

We define an a posteriori error indicator that combines the transport
residual with angular diffusion jump terms across element
interfaces. For each element $K \in \mathcal{T}$, define
\begin{equation}
  \label{eq:indicator}
  \eta_K^2
  :=
  \Norm{ \cL(\psi_h) }_{\leb2(K)}^2
  +
  \epsilon
  \sum_{e \subset \partial K}
  h_e \,
  \Norm{ \llbracket \nabla \psi_h \cdot \vec{\omega}^\perp \rrbracket }_{\leb2(e)}^2,
\end{equation}
where $h_e$ is the length of the face $e$. This indicator reflects
both the misalignment of $u_h$ with transport characteristics and the
lack of regularity across element interfaces introduced by angular
diffusion.

Using a maximum marking strategy given in Algorithm
\ref{alg:adaptive_transport}, elements satisfying
\begin{equation}
  \eta_K \geq \theta \max_{K' \in \mathcal{T}} \eta_{K'},
\end{equation}
with $\theta = 0.01$, are refined to construct the next finite element
space $\fes_{i+1}$.

\begin{algorithm}[h!]
  \caption{Adaptive mesh refinement for proton transport}
  \label{alg:adaptive_transport}
  \begin{algorithmic}[1]
    \Require Initial mesh $\mathcal{T}_0$, tolerance parameter $\theta \in (0,1]$, maximum refinement levels $N_{\max}$.
    \State Set $i = 0$.
    \While{$i < N_{\max}$}
      \State \textbf{Solve.} Find $\psi_h^i \in \fes_i$ solving (\ref{eq:discrete_inequality}).
      \State \textbf{Estimate.} Compute elementwise error indicator (\ref{eq:indicator}).
      \State \textbf{Mark} a set of elements
      \begin{equation*}
        \mathcal{M}_i = \{ K \in \mathcal{T}_i \mid \eta_K \geq \theta \max_{K' \in \mathcal{T}_i} \eta_{K'} \}.
      \end{equation*}
      \State \textbf{Refine.} Construct a new finite element space $\fes_{i+1}$ by refining elements in $\mathcal{M}_i$.
      \State \textbf{Update} the mesh: $\mathcal{T}_{i+1} \gets \text{refined mesh}$.
      \State Set $i \gets i+1$.
    \EndWhile
  \end{algorithmic}
\end{algorithm}

Figure~\ref{fig:ex1-adaptive} illustrates the effect of adaptive mesh
refinement driven by the a posteriori indicator $\eta_K$ defined in
\eqref{eq:indicator}. The left panel shows the refined mesh and
corresponding fluence $\psi_h$ computed using
Algorithm~\ref{alg:adaptive_transport}, while the right panel reports
convergence of the energy norm error under uniform and adaptive
refinement. The adaptive strategy achieves comparable accuracy with
significantly fewer degrees of freedom.

\begin{figure}[h!]
  \centering
  \begin{subfigure}[b]{0.45\textwidth}
    \includegraphics[trim=20 20 20 20, clip, width=\linewidth]{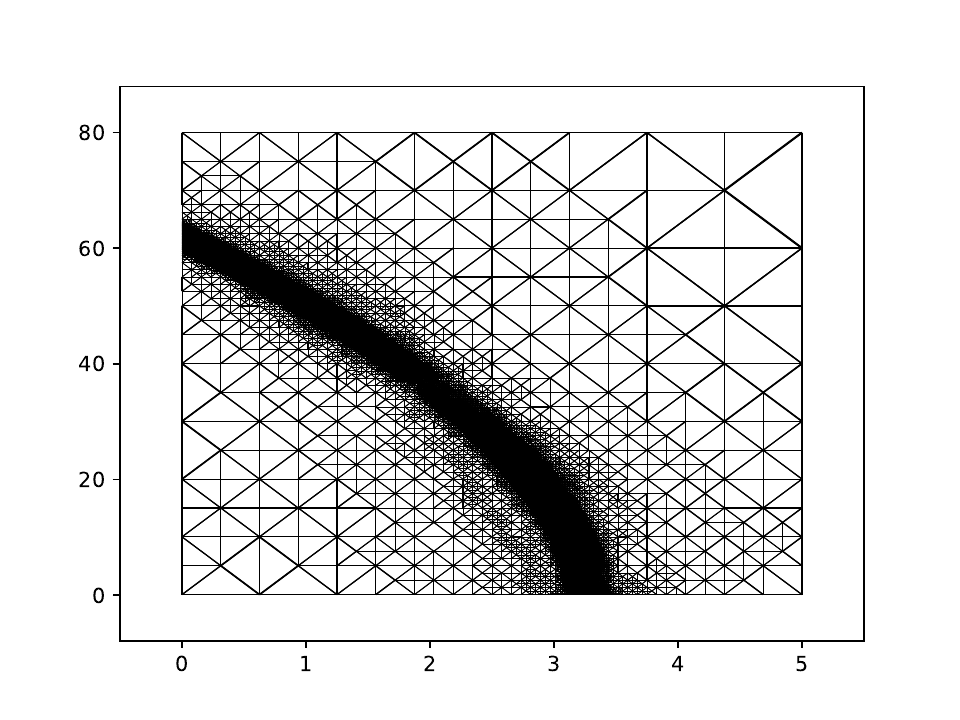}

    \caption{Adaptive mesh.}
    \label{fig:ex1-mesh}
  \end{subfigure}
  \hfill
  \begin{subfigure}[b]{0.5\textwidth}
    \begin{tikzpicture}[scale=.75]
      \begin{axis}[
          width=\linewidth,
          xmode=log, ymode=log,
          grid=both,
          major grid style={black!30},
          xlabel = $\dim{\fes}$,
          ylabel = $\enorm{\psi - \psi_h}$,
          legend style={at={(0.97,0.97)},anchor=north east},
          legend cell align={left}
      ]
      \addplot[solid, mark=square*, mark options={scale=1, solid}, color={black},
               line width=1.0] coordinates {
        (97921, 0.23734511)
        (390241, 0.0859666)
        (1558081, 0.03035232)
        (6226561, 0.01072219)};
      \addlegendentry{Uniform}
      \addplot[solid, mark=square*, mark options={scale=1,solid}, color=orange, 
               line width=1.0] coordinates {
        (5669, 0.23736301)
        (15321, 0.07355093)
        (50350, 0.02584937)
        (187699, 0.00936292)
      };
      \addlegendentry{Adaptive}
      \draw[dashed, thick] 
        (1e5, 0.1) -- (4e5, 0.035355) -- (1e5, 0.035355) -- cycle;
      \node[anchor=south west] at (axis cs:1e5, 0.022) {$\mathcal{O}(h^{3/2})$};
      \end{axis}
    \end{tikzpicture}
    \caption{Convergence under uniform and adaptive refinement.}
    \label{fig:ex1-convergence}
  \end{subfigure}

  \caption{ Adaptive mesh refinement driven by the a posteriori
    indicator $\eta_K$ from \eqref{eq:indicator}.  (a) Refined mesh
    after several iterations of
    Algorithm~\ref{alg:adaptive_transport}.  (b) Convergence of the
    energy norm error $\enorm{\psi - \psi_h}$ under uniform and
    adaptive refinement.  The adaptive method achieves comparable
    accuracy with substantially fewer degrees of freedom.
    \label{fig:ex1-adaptive}  
  }
\end{figure}

In Figure \ref{fig:ex1-dose}, we compare the exact dose distribution
with results from the vanilla SUPG method and the
positivity-preserving SUPG (PP-SUPG) in both uniform and adaptive
refinement settings. All methods appear visually compatible in an
eyeball norm; however, key differences emerge in the details. The
vanilla SUPG method underestimates the magnitude of the Bragg peak and
exhibits unphysical negative dose values beyond the peak. In contrast,
the positivity-preserving scheme eliminates these negative values,
ensuring physically consistent behaviour. The adaptive
positivity-preserving scheme achieves excellent agreement with the
exact solution, accurately capturing both the position and magnitude
of the Bragg peak while using fewer degrees of freedom.

\begin{figure}
  \centering
  \includegraphics[width=0.5\linewidth]{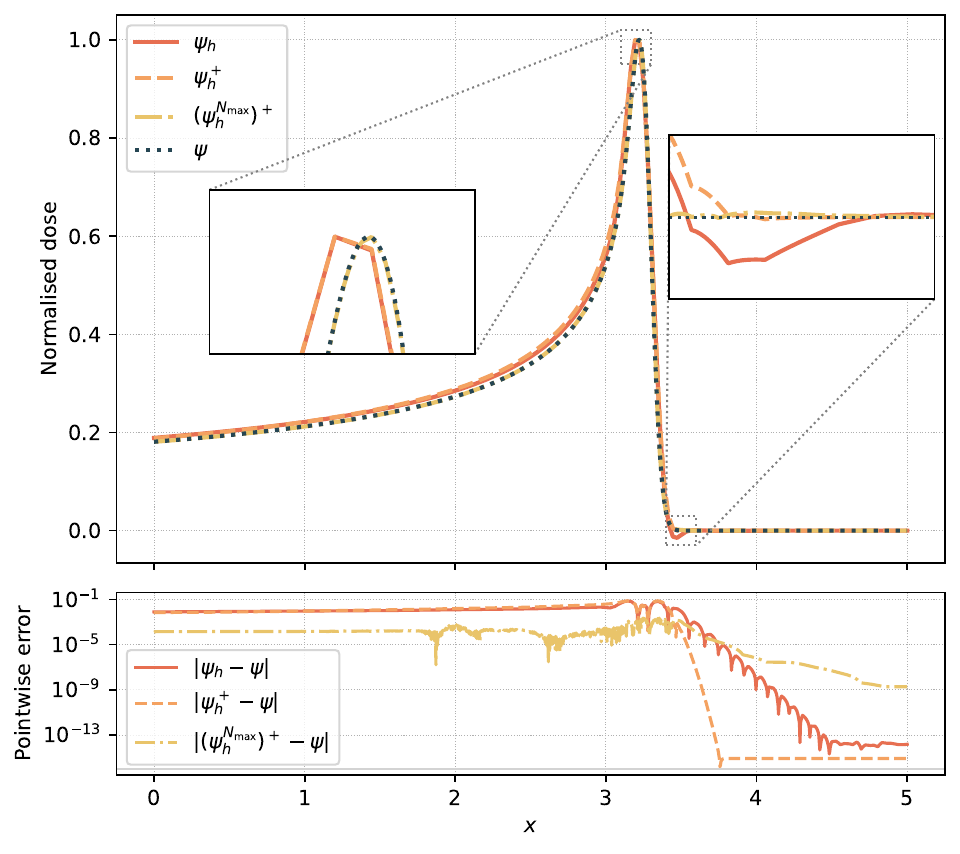}
  \caption{
    \label{fig:ex1-dose}
    Spatial distribution of pointwise error in the absorbed dose,
    comparing three numerical schemes against the exact reference.
    The blue curve corresponds to the standard SUPG method
    ($\approx 9.7 \times 10^5$ DoFs), the red curve to the
    positivity-preserving variational inequality (VI) formulation on
    the same mesh, and the green curve to the VI scheme with adaptive
    refinement ($\approx 5.0 \times 10^5$ DoFs). All use trapezoidal
    quadrature for energy integration. The VI schemes yield
    lower errors throughout the domain, with the adaptive method
    achieving the most accurate result near the Bragg peak while
    maintaining a reduced computational cost.
  }
\end{figure}

\clearpage

\subsection{Example 3: Angular Scattering and Laplace--Beltrami Diffusion}

We now investigate the effect of angular scattering on proton
transport by incorporating a nonzero Laplace--Beltrami coefficient
$\epsilon > 0$ into the model. The underlying operator arises from the
linear Boltzmann transport equation in phase space $(\vec x, \vec
\omega, E) \in \W_{\vec{x}} \times \mathbb{S}^{d-1} \times \W_E$, with
angular scattering described by
\begin{equation}
  \cL(\psi(\vec x, \vec \omega, E))
  =
  \int_{\mathbb{S}^{d-1}} \pi(\vec \omega \cdot \vec \omega') \psi(\vec x, \vec \omega', E) \d \vec \omega'
  -
  \psi(\vec x, \vec \omega, E),
\end{equation}
where $\pi(\vec \omega \cdot \vec \omega')$ is a normalised phase
function. A common choice is the Henyey--Greenstein kernel,
\begin{equation}
  \pi(\vec \omega \cdot \vec \omega') 
  =
  \frac{1 - g^2}{\qp{1 + g^2 - 2g (\vec \omega \cdot \vec \omega')}^{3/2}},
\end{equation}
which becomes sharply forward-peaked as $g \to 1^-$. In this regime,
the operator admits the angular diffusion (Fokker--Planck)
approximation
\begin{equation}
  \cL(\psi)
  \approx
  \epsilon \Delta_{\vec \omega} \psi, \qquad
  \epsilon = \tfrac{1}{2}(1 - g),
\end{equation}
where $\Delta_{\vec \omega}$ denotes the Laplace--Beltrami operator on
the sphere.

We solve the angular diffusion model using the SUPG method described
in \S\ref{sec:supg}, with adaptive mesh
refinement. Figure~\ref{fig:manyeps} shows the dose fields for varying
angular diffusion strengths $\epsilon \in \{0, 0.005, 0.01, 0.1\}$. As
$\epsilon$ increases, angular scattering broadens the fluence and
smooths the Bragg peak, leading to spatially diffuse dose
distributions. These results illustrate the role of angular diffusion
in regulating beam directionality and spatial dose spread.

\begin{figure}[h!]
    \centering
    \begin{subfigure}{0.4\textwidth}
        \includegraphics[width=\linewidth]{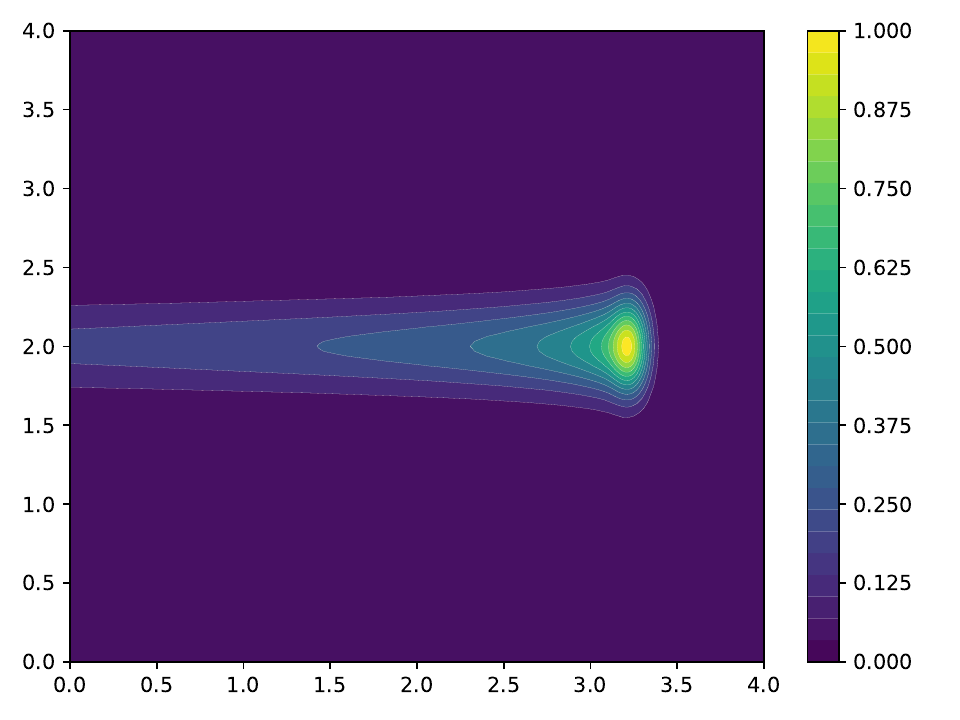}
        \caption{$\epsilon = 0$}
    \end{subfigure}
    \hfill
    \begin{subfigure}{0.4\textwidth}
        \includegraphics[width=\linewidth]{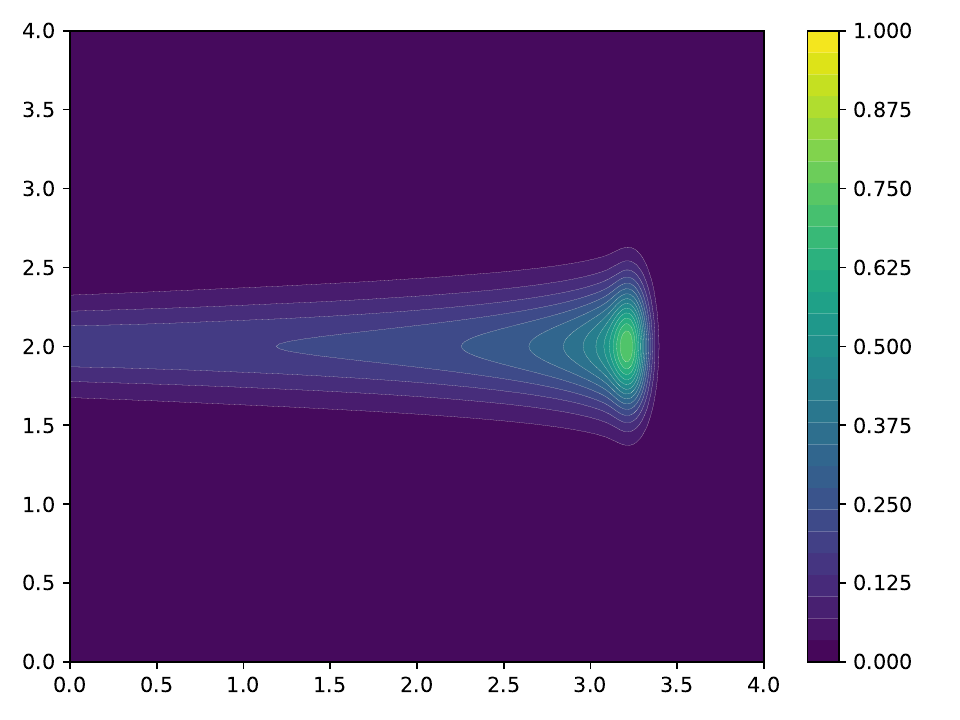}
        \caption{$\epsilon = 0.005$}
    \end{subfigure}
    \begin{subfigure}{0.4\textwidth}
        \includegraphics[width=\linewidth]{Figures/ex3eps_0_01.pdf}
        \caption{$\epsilon = 0.01$}
    \end{subfigure}
    \hfill
    \begin{subfigure}{0.4\textwidth}
        \includegraphics[width=\linewidth]{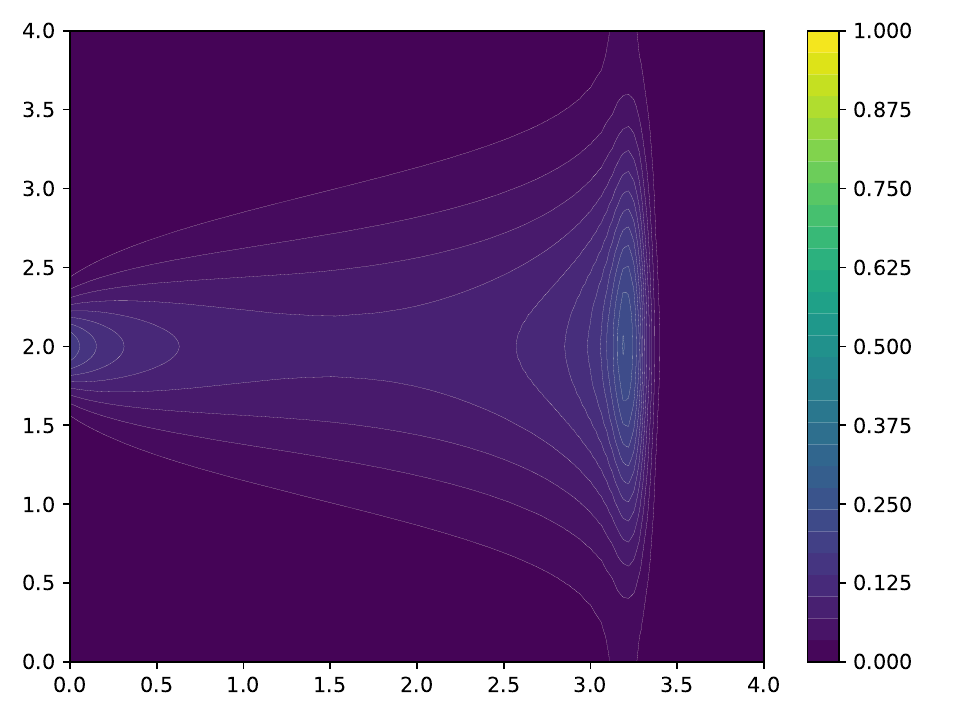}
        \caption{$\epsilon = 0.1$}
    \end{subfigure}
    \caption{
      \label{fig:manyeps}
      Absorbed dose computed using the SUPG scheme with adaptive mesh
      refinement for increasing values of the angular diffusion
      coefficient $\epsilon$. The dose at $\epsilon = 0$ is normalised
      by its maximum value; all others are plotted using the same
      scale for comparison. As $\epsilon$ increases, angular
      scattering smooths the dose distribution, reducing peak
      sharpness and increasing lateral spread.  }
\end{figure}

\begin{figure}[h!]
  \centering
  \begin{subfigure}[b]{0.48\textwidth}
    \includegraphics[width=\linewidth]{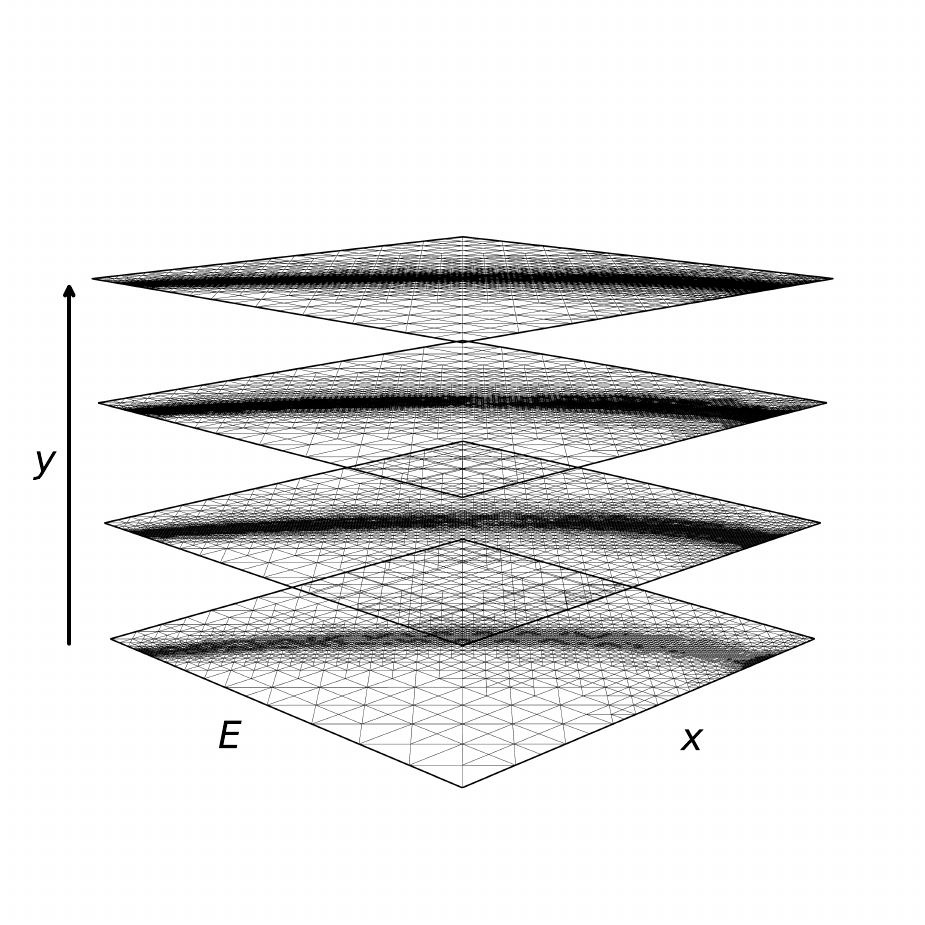}
    \caption{$\epsilon = 0$}
  \end{subfigure}
  \hfill
  \begin{subfigure}[b]{0.48\textwidth}
    \includegraphics[width=\linewidth]{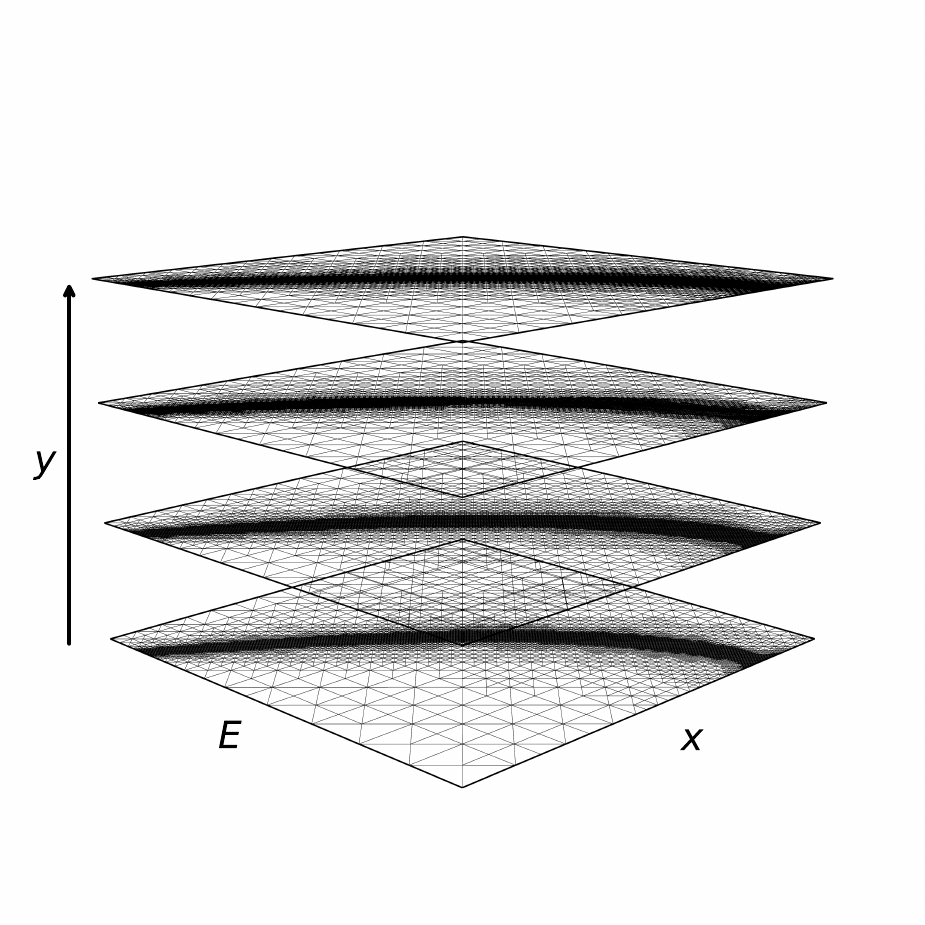}
    \caption{$\epsilon = 0.005$}
  \end{subfigure}
  \vfill
  \begin{subfigure}[b]{0.48\textwidth}
    \includegraphics[width=\linewidth]{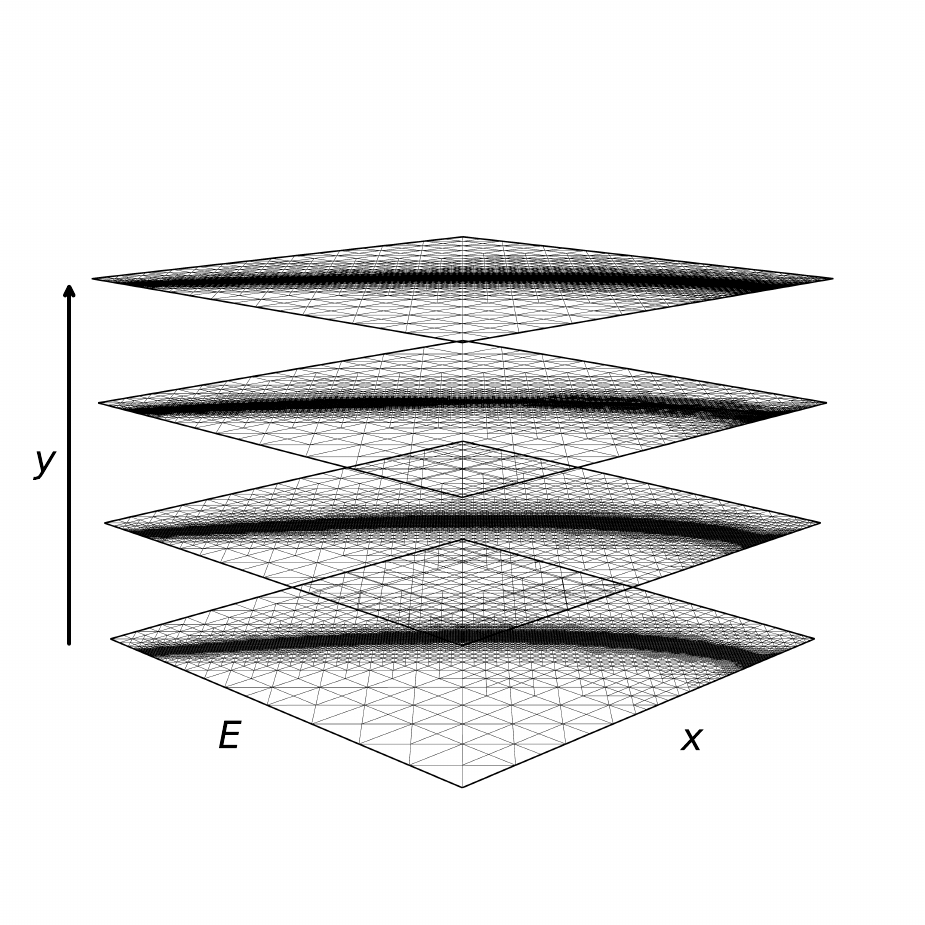}
    \caption{$\epsilon = 0.01$}
  \end{subfigure}
  \hfill
  \begin{subfigure}[b]{0.48\textwidth}
    \includegraphics[width=\linewidth]{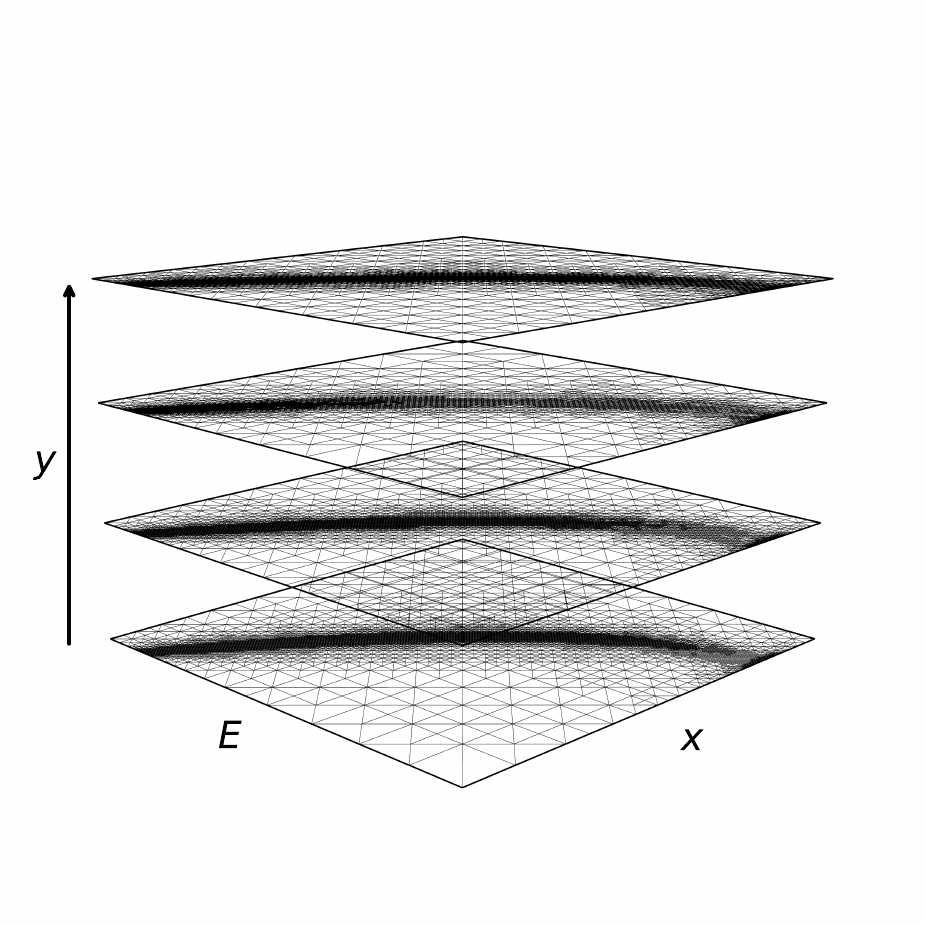}
    \caption{$\epsilon = 0.1$}
  \end{subfigure}
  \caption{
    \label{fig:ex3-stackedmesh}
    Adaptive meshes sliced in energy and rendered as stacked layers
    for varying values of the angular diffusion coefficient $\epsilon
    \in \{0,\, 0.005,\, 0.01,\, 0.1\}$. Finer resolutions follow the
    evolution of the angularly smeared Bragg peak. Compare with the
    dose profiles in Figure~\ref{fig:manyeps} and fluence slices in
    Figure~\ref{fig:fluencemesh}.  }
\end{figure}

\clearpage

\subsection{Example 4: Orbital Case Study with Layered Heterogeneous Media}

We conclude with a case study motivated by ocular proton therapy,
where beam paths pass through anatomically heterogeneous structures
before reaching the posterior orbit. The domain is modelled as a
sequence of planar, homogeneous layers representing key tissue types
encountered along the beam path. An idealised one-dimensional
configuration is shown in Figure~\ref{fig:tissue_depth}. Material
densities are based on representative clinical values, with slight
exaggeration introduced to better illustrate the influence of
heterogeneity on dose response.

\definecolor{myorange}{RGB}{253,184,99}
\definecolor{myorchid}{RGB}{197,179,230}
\definecolor{myskyblue}{RGB}{190,220,240}
\definecolor{myred}{RGB}{255,130,130}
\definecolor{mygray}{RGB}{220,220,220}
\definecolor{mydarkorange}{RGB}{240,180,90}

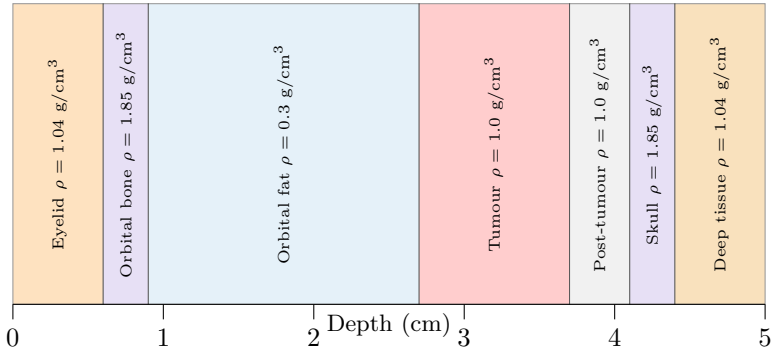
\begin{figure}[h!]
  \centering
  \begin{tikzpicture}[xscale=2]
    
    \draw[fill=myorange, draw=black, opacity=0.4]       (0,0)   rectangle (0.6,4);  
    \draw[fill=myorchid, draw=black, opacity=0.4]       (0.6,0) rectangle (0.9,4);  
    \draw[fill=myskyblue, draw=black, opacity=0.4]      (0.9,0) rectangle (2.7,4);  
    \draw[fill=myred, draw=black, opacity=0.4]          (2.7,0) rectangle (3.7,4);  
    \draw[fill=mygray, draw=black, opacity=0.4]         (3.7,0) rectangle (4.1,4);  
    \draw[fill=myorchid, draw=black, opacity=0.4]       (4.1,0) rectangle (4.4,4);  
    \draw[fill=mydarkorange, draw=black, opacity=0.4]   (4.4,0) rectangle (5.0,4);  
    
    \node[rotate=90] at (.3,2)     {{\tiny Eyelid $\rho = 1.04$ g/cm$^3$}};
    \node[rotate=90] at (.75,2)    {{\tiny Orbital bone $\rho = 1.85$ g/cm$^3$}};
    \node[rotate=90] at (1.8,2)    {{\tiny Orbital fat $\rho = 0.3$ g/cm$^3$}};
    \node[rotate=90] at (3.2,2)    {{\tiny Tumour $\rho = 1.0$ g/cm$^3$}};
    \node[rotate=90] at (3.9,2)    {{\tiny Post-tumour $\rho = 1.0$ g/cm$^3$}};
    \node[rotate=90] at (4.25,2)   {{\tiny Skull $\rho = 1.85$ g/cm$^3$}};
    \node[rotate=90] at (4.7,2)    {{\tiny Deep tissue $\rho = 1.04$ g/cm$^3$}};
    
    \draw (0,0) -- (5,0);
    \node[below] at (2.5,0) {\small Depth (cm)};
    \foreach \x in {0,1,...,5}
    \draw (\x,0) -- (\x,-0.2) node[below] {\x};
  \end{tikzpicture}
  \caption{Idealised tissue composition along an ocular proton beam
    path. Layers represent major anatomical structures in the orbital
    region, with material densities indicated. The tumour lies at a
    depth of approximately 3cm. Bragg--Kleeman parameters for each
    region are given in Table~\ref{tab:range_energy}.}
  \label{fig:tissue_depth}
\end{figure}

A proton beam enters through the eyelid and traverses alternating
high- and low-density regions before reaching the tumour located at
depth $z \approx 3\,\mathrm{cm}$. The heterogeneous structure
introduces strong spatial variability in stopping power and scattering
behaviour, posing challenges for standard transport solvers.

Figure~\ref{fig:ex5vi} presents the numerical results obtained using
the positivity-preserving variational inequality
scheme~\eqref{eq:discrete_inequality}. The top-left panel shows the
prescribed inflow spectrum. The middle panel displays the computed
fluence in physical space and energy, while the bottom panel reports
the resulting absorbed dose.

Despite the discontinuous material parameters, the proposed method
maintains non-negativity, avoids spurious oscillations, and captures
the correct spatial and energy localisation of the Bragg peak. The
dose rapidly attenuates beyond the tumour boundary, and the fluence
remains stable across interfaces without the need for post-processing
or filtering. These results highlight the robustness of the scheme in
resolving transport through heterogeneous media with sharp
transitions.

\begin{figure}[h!]
  \centering
  \includegraphics[width=0.7\linewidth]{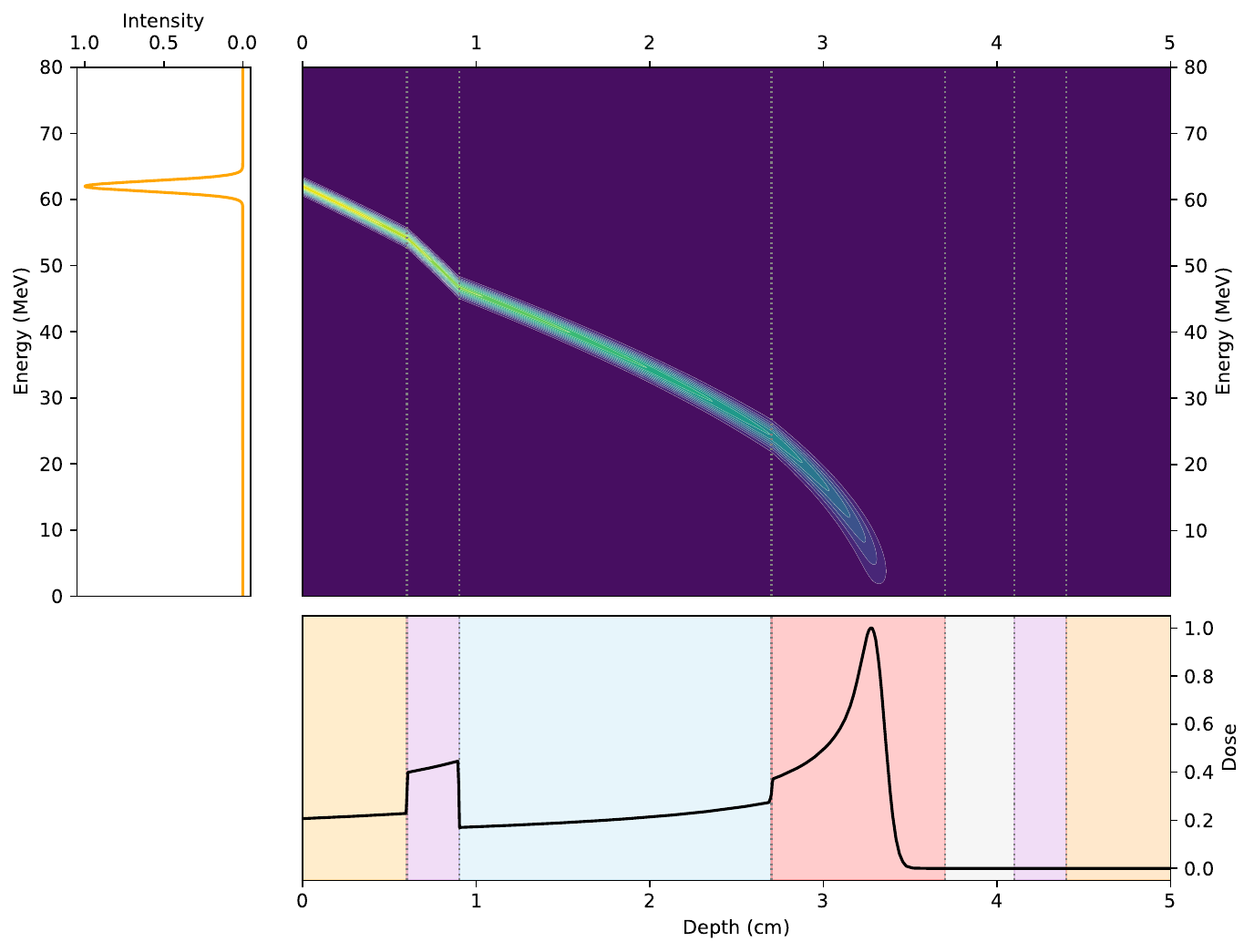}
  \caption{
    \label{fig:ex5vi}
    Simulation of a proton beam through layered heterogeneous media using the positivity-preserving scheme~\eqref{eq:discrete_inequality}. The top-left panel shows the inflow energy boundary condition. The middle panel presents the computed space-energy fluence, and the bottom panel shows the absorbed dose. The method captures material discontinuities without oscillation and preserves physical bounds across interfaces.
  }
\end{figure}

\section{Conclusion}
\label{sec:conclusion}

We have presented a deterministic finite element framework for
modelling proton transport that accounts for both inelastic energy
loss and angular scattering. A key contribution of this work is the
development of a positivity-preserving variational inequality
formulation, which ensures non-negative fluence and dose even on
coarse meshes. This formulation is compatible with streamline-upwind
Petrov--Galerkin (SUPG) stabilisation and enables accurate computation
of physically meaningful observables such as absorbed dose.

The proposed method was validated against analytic benchmarks, with
numerical experiments demonstrating optimal convergence rates,
stability under adaptive mesh refinement and robustness in the
presence of sharp material interfaces. In particular, we showed that
the variational inequality scheme eliminates spurious oscillations
that commonly arise in standard discretisations and is capable of
resolving the Bragg peak and steep energy gradients with high
fidelity. Extensions to angular diffusion models and heterogeneous
geometries confirm the versatility of the method in clinically
relevant scenarios.

This work provides a foundation for future developments in treatment
planning and robustness analysis. Immediate extensions include
dose-weighted quantities such as LET, biological models based on
survival fraction and sensitivity analysis under anatomical
uncertainty. The structure-preserving properties of the scheme also
suggest its potential compatibility with inverse planning and
optimisation frameworks.

\section*{Acknowledgements}

This work was initiated at a workshop hosted by Mathrad, supported by
the EPSRC programme grant EP/W026899/1, which subsequently funded
TP. BA and TP also received support from the Leverhulme Trust grant
RPG-2021-238 and TP the EPSRC grant EP/X030067/1. The research was
conducted by a working group sponsored by the radioprotection theme of
the Institute for Mathematical Innovation partially funding both BA and AH.

\printbibliography

\end{document}